\newtheorem{theorem}{Theorem}[section]
\newtheorem{lemma}[theorem]{Lemma}
\newtheorem{proposition}[theorem]{Proposition}
\newtheorem{corollary}[theorem]{Corollary}
\theoremstyle{definition}
\newtheorem{example}[theorem]{Example}
\theoremstyle{remark}
\newtheorem{remark}[theorem]{Remark}
\numberwithin{equation}{section}
\begin{document}
\setcounter{page}{1}

\title[Maximal Rigid Representations of Continuous Quiver]{Maximal Rigid Representations of Continuous Quivers of Type $A$ with Automorphism}

\author[XIAOWEN GAO and MINGHUI ZHAO]{XIAOWEN GAO$^1$ and MINGHUI ZHAO$^2$$^{*}$}

\address{$^{1}$ School of Science, Beijing Forestry University, Beijing 100083, P. R. China}
\email{gaoxiaowen@bjfu.edu.cn (X.Gao)} 

\address{$^{2}$ School of Science, Beijing Forestry University, Beijing 100083, P. R. China}
\email{zhaomh@bjfu.edu.cn (M.Zhao)}

%\dedicatory{This paper is dedicated to Professor ABCD}

\subjclass[2010]{16G20}

\keywords{maximal rigid representations, continuous quivers with automorphism.}

\thanks{$^{*}$ Corresponding author}
\date{\today}

\begin{abstract}
Buan and Krause gave a classification of maximal rigid representations for cyclic quivers and counted the number of
isomorphism classes. By using this result, we give a formula on the number of isomorphism classes of a kind of maximal rigid representations for continuous quivers of
type $A$ with automorphism.
\end{abstract} \maketitle

\section{Introduction }
%quiver 的历史，A型的定理 推广到带自同构的A
%刚性表示的历史
%本文解决的问题
%各部分的内容

Quivers play an important role in algebraic representation theory and geometric representation theory. Gabriel gave the definitions of quivers and their representations in \cite{Gabriel1972Unzerlegbare}. He also gave the classification of indecomposable representations of quivers of finite type. 

Representation theory of quivers of type $A$ and its generalizations have important applications in topological data analysis (see \cite{oudot2017persistence}). 
In \cite{Crawley2015Decomposition}, Crawley-Boevey gave a
classification of indecomposable representations of $\mathbb{R}$.
In \cite{2017Interval}, Botnan gave a
classification of indecomposable representations of infinite zigzag.
Igusa, Rock and Todorov introduced general continuous quivers of type
$A$ and classified indecomposable representations in \cite{Igusa2022Continuous}.
In \cite{2020Decomposition}, Hanson and Rock introduced continuous cyclic quivers and classified their indecomposable representations.
In \cite{Appel_2020,Appel_2022,Sala_2019,Sala},\, Appel, Sala
and Schiffmann introduced continuum quivers independently. They also introduced the continuum Kac–Moody algebra and continuum quantum group associated to a continuum quiver.

In \cite{1980Generalizations}, Brenner and Butler  gave the definition of tilting modules. The current definition of tilting modules was given by Happel and Ringel in \cite{happel1982tilted}. In \cite{1982Covering}, Bongartz and Gabriel classified the tilting representations of quivers of type $A$ with linear orientation and counted the number of isomorphism classes. Buan and Krause  gave the classification for cyclic quivers and counted the number of isomorphism classes in \cite{2004Tilting}.
In \cite{liu2023maximalrigidrepresentationscontinuous}, Liu and Zhao gave a formula on the number of isomorphism classes of a kind of maximal rigid representation for continuous quivers of type $A$.

In this paper, the main result is a formula on the number of isomorphism classes of maximal rigid representations of continuous quivers of type $A$ with automorphism. The proof is based on the formula given by Buan and Krause in \cite{2004Tilting} and similar to the proof in \cite{liu2023maximalrigidrepresentationscontinuous}.
As an application, we give a formula on the number of isomorphism classes of a kind of maximal rigid representations of continuous cyclic quivers.

In Section 2, we give some notions and the main result. We recall some results on titling and maximal rigid representations in Section 3. The main result is proved in Section 4. In Section 5, we give some results for continuous cyclic quivers.

\section{Continuous quivers of type A with automorphism}

\subsection{Continuous quivers of type $A$ with automorphism}

Let $\mathbb{R}$ be the set of real numbers and $<$ be the normal order on $\mathbb{R}$.
Following notations in \cite{Igusa2022Continuous}, $A_{\mathbb{R}}=(\mathbb{R},<)$ is called a continuous quiver of type $A$.

Let $k$ be a fixed field. A representation of $A_{\mathbb{R}}$ over $k$ is given by $\mathbf{V}=(\mathbf{V}(x),\mathbf{V}(x,y))$, where $\mathbf{V}(x)$ is a $k$-vector space for any $x\in\mathbb{R}$ and $\mathbf{V}(x,y):\mathbf{V}(x)\rightarrow \mathbf{V}(y)$ is a $k$-linear map for any $x<y\in\mathbb{R}$ satisfying that $\mathbf{V}(x,y)\circ\mathbf{V}(y,z)=\mathbf{V}(x,z)$ for any $x<y<z\in\mathbb{R}$. 
Let $\mathbf{V}=(\mathbf{V}(x),\mathbf{V}(x,y))$ and  $\mathbf{W}=(\mathbf{W}(x),\mathbf{W}(x,y))$ be two representations of $A_{\mathbb{R}}$. A family of $k$-linear maps $\varphi=(\varphi_{x})_{x\in \mathbb{R}}$ is called a morphism from $\mathbf{V}$ to $\mathbf{W}$, if $\varphi_{y}\mathbf{V}(x,y)$= $\mathbf{W}(x,y) \varphi_{x}$, for any $x<y\in \mathbb{R}$.
Denoted by $\mathrm{Rep}_{k}(A_{\mathbb{R}})$ the category of representations of $A_{\mathbb{R}}$.
%and $\mathrm{Rep}_{k}^{pwf}(A_{\mathbb{R}})$ the subcategory of pointwise finite-dimensinal representations of $A_{\mathbb{R}}$.

Consider a map $\sigma:\mathbb{R}\rightarrow\mathbb{R}$ sending $x$ to $x+1$. In this paper, $\mathbf{Q}=(A_{\mathbb{R}},\sigma)$ is called a continuous quiver of type $A$ with automorphism $\sigma$.
A representation of $\mathbf{Q}$ over $k$ is given by a representation $\mathbf{V}=(\mathbf{V}(x),\mathbf{V}(x,y))$ of $A_{\mathbb{R}}$ such that
$\mathbf{V}(x)=\mathbf{V}(\sigma(x))$ and $\mathbf{V}(\sigma(x),\sigma(y))=\mathbf{V}(x,y)$.  
%Let $\mathbf{V}=(\mathbf{V}(x),\mathbf{V}(x,y))$ and  $\mathbf{W}=(\mathbf{W}(x),\mathbf{W}(x,y))$ be two representations of $\mathbf{Q}$. A family of $k$-linear maps $\varphi=(\varphi_{x})_{x\in \mathbb{R}}$ is called a morphism from $\mathbf{V}$ to $\mathbf{W}$, if $\varphi_{y} \mathbf{V}(x,y)$= $\mathbf{W}(x,y) \varphi_{x}$, for any $x<y\in \mathbb{R}$.
Denoted by $\mathrm{Rep}_{k}(\mathbf{Q})$ the category of representations of $\mathbf{Q}$.
%and $\mathrm{Rep}_{k}^{pwf}(\mathbf{Q})$ the subcategory of pointwise finite-dimensinal representations of $\mathbf{Q}$.

%Note that an object $\mathbf{V}$ of $\mathrm{Rep}_{k}(\mathbf{Q})$ can be viewed as an object in $\mathrm{Rep}_{k}(A_{\mathbb{R}})$.

For a representation $\mathbf{V}=(\mathbf{V}(x),\mathbf{V}(x,y))$ of $A_{\mathbb{R}}$, consider a representation $\sigma^\ast(\mathbf{V})=(\sigma^\ast(\mathbf{V})(x),\sigma^\ast(\mathbf{V})(x,y))$, where $\sigma^\ast(\mathbf{V})(x)=\mathbf{V}(\sigma(x))$ and $\sigma^\ast(\mathbf{V})(x,y)=\mathbf{V}(\sigma(x),\sigma(y))$. Note that $\bigoplus_{k\in\mathbb{Z}}(\sigma^{\ast})^k\mathbf{V}$ is a representation of $\mathbf{Q}$.

For  $a\leq b\in\mathbb{R}$, we use the notation $|a,b|$  for one of $(a,b),[a,b),(a,b]$ and $[a,b]$. In this paper, $a=-\infty$ and $b=+\infty$ are allowed. Hence, the notation $|a,b|$ may mean $(a,+\infty)$, $[a,+\infty)$, $(-\infty,b)$, $(-\infty,b]$ or $(-\infty,+\infty)$, too.

Denote $\mathbf{T}'_{|a,b|}=(\mathbf{T}'_{|a,b|}(x),\mathbf{T}'_{|a,b|}(x,y))$ as the following representation $A_{\mathbb{R}}$, where
\begin{equation*}
   \mathbf{T}'_{|a,b|}(x)=\left\{  
\begin{aligned}
k,  & \qquad  x \in |a,b|; \\
0,& \qquad \textrm{otherwise};\\
\end{aligned}
\right.
\end{equation*}
and
\begin{equation*}
  \mathbf{T}'_{|a,b|}(x,y)=\left\{  
\begin{aligned}
1_k,  & \qquad  x<y  \textrm{ and } x,y \in |a,b|; \\
0,& \qquad \textrm{otherwise}. \\
\end{aligned}
\right.
\end{equation*}

%According to \cite{2018Decomposition,Igusa2022Continuous}, the pointwise finite-dimensinal representation of continuous quiver of type $A$ with automorphism can be decomposed as the direct sum of interval representations. 

Let $\mathbf{T}_{|a,b|}=\bigoplus_{k\in\mathbb{Z}}(\sigma^{*})^{k}(\mathbf{T}'_{|a,b|})$. Since the representation $\mathbf{T}'_{|a,b|}$ is indecomposable, so is $\mathbf{T}_{|a,b|}$.
Denote $\mathrm{Rep}'_{k}(\mathbf{Q})$ the subcategory of $\mathrm{Rep}_{k}(\mathbf{Q})$ consisting of representations $\mathbf{M}$ with decomposition
\begin{equation*}
   \mathbf{M}=\bigoplus_{i\in I}\mathbf{T}_{|a_i,b_i|}.
\end{equation*}

\subsection{Maximal rigid representations}

Let $\mathbf{M}$ be an object of category $\mathrm{Rep}'_{k}(\mathbf{Q})$ with decomposition
\begin{equation*}
    \mathbf{M}=\bigoplus_{i\in I}\mathbf{M}_i,
\end{equation*}
where $\mathbf{M}_i$ is indecomposable for any $i\in I$.
The representation $\mathbf{M}$ is called basic, if it satisfies $\mathbf{M}_i \ncong \mathbf{M}_{j}$ for any $i\neq j$. And if it satisfies $\mathrm{Ext}^{1}(\mathbf{M} ,\mathbf{M})=0$, it is called rigid. The representation $\mathbf{M}$ is called maximal rigid, if it is basic and rigid and $\mathbf{M} \oplus \mathbf{N}$ is rigid implies that $\mathbf{N}\in\mathrm{add}\mathbf{M} $ for any object $\textbf{N}$ of category $\mathrm{Rep}'_{k}(\mathbf{Q})$. Similarly, we can define the concept of maximal rigid representations in $\mathrm{Rep}'_{k}(A_\mathbb{R})$.

%According to Lemma 3.1 in \cite{2004Tilting}, we have an lemma of compatible similarly.

\begin{lemma}\label{lemma-rigid}
     Let $\mathbf{M}$ be an object of category $\mathrm{Rep}'_{k}(\mathbf{Q})$. Then the represntation $\mathbf{M}$ is rigid in $\mathrm{Rep}'_{k}(\mathbf{Q})$ if and only if it is rigid in $\mathrm{Rep}'_{k}(A_\mathbb{R})$.
\end{lemma}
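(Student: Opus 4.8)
The plan is to relate the two Ext-groups through the forgetful functor and its left adjoint, reducing both of them to a single Ext-group over $A_{\mathbb R}$. I would write $U\colon\mathrm{Rep}_k(\mathbf Q)\to\mathrm{Rep}_k(A_{\mathbb R})$ for the functor forgetting the $\sigma$-structure and, for an $A_{\mathbb R}$-representation $\mathbf V$, set $\mathrm{Ind}(\mathbf V)=\bigoplus_{k\in\mathbb Z}(\sigma^{\ast})^{k}\mathbf V$, which lies in $\mathrm{Rep}_k(\mathbf Q)$ as already observed. Writing $\mathbf M=\bigoplus_{i\in I}\mathbf T_{|a_i,b_i|}$ and $\mathbf M'=\bigoplus_{i\in I}\mathbf T'_{|a_i,b_i|}$, and using that $\sigma^{\ast}$ commutes with direct sums, one has $\mathbf M=\mathrm{Ind}(\mathbf M')$. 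Here rigidity means the vanishing of $\mathrm{Ext}^{1}$ in the ambient categories, so the lemma amounts to showing that $\mathrm{Ext}^{1}_{\mathbf Q}(\mathbf M,\mathbf M)=0$ holds precisely when $\mathrm{Ext}^{1}_{A_{\mathbb R}}(U\mathbf M,U\mathbf M)=0$.

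First I would establish the adjunction $\mathrm{Ind}\dashv U$. On Hom-spaces this is immediate: a $\mathbf Q$-morphism out of $\mathrm{Ind}(\mathbf V)$ is $\sigma$-equivariant and hence determined by its restriction to the $k=0$ summand, which gives a natural isomorphism $\mathrm{Hom}_{\mathbf Q}(\mathrm{Ind}(\mathbf V),\mathbf N)\cong\mathrm{Hom}_{A_{\mathbb R}}(\mathbf V,U\mathbf N)$. Both functors are exact, since exactness is tested pointwise on the spaces $\mathbf V(x)$ and arbitrary direct sums of vector spaces are exact. For an adjoint pair of exact functors the Yoneda description of $\mathrm{Ext}^{1}$ upgrades the Hom-isomorphism to
\begin{equation*}
\mathrm{Ext}^{1}_{\mathbf Q}(\mathrm{Ind}(\mathbf V),\mathbf N)\cong\mathrm{Ext}^{1}_{A_{\mathbb R}}(\mathbf V,U\mathbf N),
\end{equation*}
the two directions being given by applying the relevant exact functor to an extension and then pulling back along the unit, respectively pushing out along the counit. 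Specializing to $\mathbf V=\mathbf M'$ and $\mathbf N=\mathbf M$ yields $\mathrm{Ext}^{1}_{\mathbf Q}(\mathbf M,\mathbf M)\cong\mathrm{Ext}^{1}_{A_{\mathbb R}}(\mathbf M',U\mathbf M)$.

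To finish I would relate $\mathrm{Ext}^{1}_{A_{\mathbb R}}(\mathbf M',U\mathbf M)$ to $\mathrm{Ext}^{1}_{A_{\mathbb R}}(U\mathbf M,U\mathbf M)$. Since $U\mathbf M=\bigoplus_{j\in\mathbb Z}(\sigma^{\ast})^{j}\mathbf M'$ and $\mathrm{Ext}^{1}$ turns a direct sum in the first variable into a product, and since $\sigma^{\ast}$ is an exact autoequivalence fixing $U\mathbf M$ (because $\mathbf M$ is a $\mathbf Q$-representation), each factor $\mathrm{Ext}^{1}_{A_{\mathbb R}}((\sigma^{\ast})^{j}\mathbf M',U\mathbf M)$ is isomorphic to $\mathrm{Ext}^{1}_{A_{\mathbb R}}(\mathbf M',U\mathbf M)$; hence
\begin{equation*}
\mathrm{Ext}^{1}_{A_{\mathbb R}}(U\mathbf M,U\mathbf M)\cong\prod_{j\in\mathbb Z}\mathrm{Ext}^{1}_{A_{\mathbb R}}(\mathbf M',U\mathbf M)\cong\prod_{j\in\mathbb Z}\mathrm{Ext}^{1}_{\mathbf Q}(\mathbf M,\mathbf M).
\end{equation*}
Thus one side vanishes exactly when the other does, which is the assertion.

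The hard part will be the passage from the Hom-adjunction to the $\mathrm{Ext}^{1}$-adjunction. The naive idea of forgetting a $\mathbf Q$-extension, splitting it over $A_{\mathbb R}$, and then averaging the splitting to make it $\sigma$-equivariant is unavailable, because the group $\mathbb Z$ is infinite and admits no averaging. Casting $\mathbf M$ as an induced object and using the adjunction is what circumvents this, but it requires care in checking that $\mathrm{Ind}$ is genuinely left adjoint to $U$ (not merely that $\bigoplus_{k}(\sigma^{\ast})^{k}\mathbf V$ lands in $\mathrm{Rep}_k(\mathbf Q)$) and that both functors are exact in the continuous setting, where the representation categories are less well-behaved than module categories over finite-dimensional algebras, together with the bookkeeping of the infinite sum and product indexed by $\mathbb Z$.
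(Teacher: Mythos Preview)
Your argument is correct and takes a genuinely different, more categorical route than the paper. The paper argues the nontrivial direction by hand: given a non-split extension $0\to\mathbf M\to\mathbf N'\to\mathbf M\to 0$ over $A_{\mathbb R}$, it normalizes the middle term so that $\mathbf N'(x)=\mathbf N'(x+1)$, restricts to a fundamental interval $[a,a+1]$ on which the sequence is still non-split, and then extends $\mathbf N'|_{[a,a+1]}$ periodically to obtain a non-split extension in $\mathrm{Rep}_k(\mathbf Q)$; the converse direction is declared ``direct''. You instead factor $\mathbf M=\mathrm{Ind}(\mathbf M')$ and invoke the adjunction $\mathrm{Ind}\dashv U$ of exact functors to get $\mathrm{Ext}^1_{\mathbf Q}(\mathbf M,\mathbf M)\cong\mathrm{Ext}^1_{A_{\mathbb R}}(\mathbf M',U\mathbf M)$, after which the $\sigma^{\ast}$-invariance of $U\mathbf M$ shows that this single group governs $\mathrm{Ext}^1_{A_{\mathbb R}}(U\mathbf M,U\mathbf M)$. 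This is cleaner and makes transparent why no averaging over $\mathbb Z$ is required: the problem is pushed entirely into the first variable, where $\mathbf M$ is induced. One small remark: the displayed product isomorphism $\mathrm{Ext}^1(\bigoplus_j V_j,W)\cong\prod_j\mathrm{Ext}^1(V_j,W)$ for an infinite index set deserves a word of justification in these functor categories, but since you only use the equivalence of vanishing---and that follows from the elementary Yoneda manoeuvres (assembling sections through the coproduct in one direction, and summing a given extension with split ones in the other)---this is not a gap.
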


\begin{proof}
Assume that $\mathbf{M}$ is not rigid $\mathrm{Rep}'_{k}(A_\mathbb{R})$. Hence there exists an object $\mathbf{N}'$ in $\mathrm{Rep}'_{k}(A_\mathbb{R})$ such that
\begin{equation}\label{exact1}
  0\rightarrow\mathbf{M}\rightarrow\mathbf{N}'\rightarrow \mathbf{M}\rightarrow0  
\end{equation} is exact and non-split.
Since $\mathbf{M}$ is an object in $\mathrm{Rep}'_{k}(\mathbf{Q})$, we have $\mathbf{M}(x)=\mathbf{M}(x+1)$ for any $x\in\mathbb{R}$. Note that $\mathbf{N}'(x)\cong\mathbf{M}(x)\oplus\mathbf{M}(x)$. Hence, we can choose  $\mathbf{N}'$ such that $\mathbf{N}'(x)=\mathbf{N}'(x+1)$ for any $x\in\mathbb{R}$.

The exact sequence \ref{exact1} implies that
\begin{equation}\label{exact2}
0\rightarrow\mathbf{M}|_{[a,a+1]}\rightarrow\mathbf{N}'|_{[a,a+1]}\rightarrow \mathbf{M}|_{[a,a+1]}\rightarrow0\end{equation}
is exact and non-split for some $a\in\mathbb{R}$.
Let $\mathbf{N}$ be the unique object in $\mathrm{Rep}'_{k}(\mathbf{Q})$ such that
\begin{equation*}
   \mathbf{N}(x)=\mathbf{N}'(x)
\end{equation*}
for any $x\in\mathbb{R}$,
and
\begin{equation*}
  \mathbf{N}(x,y)=\mathbf{N}'(x,y)  
\end{equation*}
for $x<y\in[a,a+1]$.
The exact sequence \ref{exact2} implies that
\begin{equation*}
0\rightarrow\mathbf{M}\rightarrow\mathbf{N}\rightarrow \mathbf{M}\rightarrow0\end{equation*}
is exact and non-split. Hence, $\mathbf{M}$ is not rigid $\mathrm{Rep}'_{k}(\mathbf{Q})$.

The converse is direct and we get the desired result.
\begin{comment}
Let $\textbf{M}$ be an object of $\mathrm{Rep}'_{k}(\textbf{Q})$. So we have  $\textbf{M}=\bigoplus_{i\in I} \textbf{T}_{|x_{i},y_{i}|}$. Let $\mathbf{T}_{|a,b|}=\bigoplus_{k\in\mathbb{Z}}(\sigma^{*})^{k}(\mathbf{T}'_{|a,b|})$. We can obtain $\mathbf{M}=\bigoplus_{i\in I}\mathbf{T}'_{|x_i,y_i|}$. $\mathbf{T}'_{|a,b|}$ is representation of $A_\mathbb{R}$. Then, if $\mathrm{Ext}^{1}(\mathbf{T}'_{|x_i,y_i|},\mathbf{T}'_{|x_j,y_j|})=0$ for any $i,j\in I$, we have $\mathrm{Ext}^{1}(\mathbf{M},\mathbf{M})=0$. 

And if $\mathrm{Ext}^{1}(\mathbf{T}'_{|x_i,y_i|},\mathbf{T}'_{|x_j,y_j|})\neq 0$, we have $\mathrm{Ext}^{1}(\mathbf{M},\mathbf{M})\neq 0$. So the represntation $\mathbf{M}$ is rigid in $\mathrm{Rep}'_{k}(\mathbf{Q})$ if and only if it is rigid in $\mathrm{Rep}'_{k}(A_\mathbb{R})$.  
\end{comment}    
\end{proof}

\begin{proposition}
    Let $\mathbf{M}$ be an object of category $\mathrm{Rep}'_{k}(\mathbf{Q})$ and  $\mathbf{M}=\bigoplus_{i\in I}\mathbf{T}'_{|x_i,y_i|}$ in $\mathrm{Rep}'_{k}(A_\mathbb{R})$. The representation $\mathbf{M}$ is rigid if and only if the intervals $|x_i,y_i|$ and $|x_j,y_j|$ are compatible for any $i\neq j$, that is, one of the following conditions are satisfied
    \begin{enumerate}
        \item $| x_i,y_i | \subseteq | x_j,y_j |$ or $| x_i,y_i | \supseteq | x_j,y_j |$;
        \item $y_i<x_j$ or $y_j<x_i$;
        \item $y_i=x_j$, $| x_i,y_i |=| x_i,y_i)$ and $| x_j,y_j |=(x_j,y_j|$;
        \item  $y_j=x_i$, $| x_j,y_j |=| x_j,y_j)$ and $| x_i,y_i |=(x_i,y_i|$.
    \end{enumerate}
\end{proposition}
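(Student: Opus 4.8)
The plan is to reduce the rigidity of $\mathbf{M}$ to the pairwise vanishing of $\mathrm{Ext}^1$ between the interval summands and then to read off the combinatorial criterion. By Lemma~\ref{lemma-rigid} it suffices to work in $\mathrm{Rep}'_{k}(A_{\mathbb{R}})$, where $\mathbf{M}=\bigoplus_{i\in I}\mathbf{T}'_{|x_i,y_i|}$. Since $\mathrm{Ext}^1(-,-)$ is additive in each variable,
\[
\mathrm{Ext}^1(\mathbf{M},\mathbf{M})=\bigoplus_{i,j\in I}\mathrm{Ext}^1\bigl(\mathbf{T}'_{|x_i,y_i|},\mathbf{T}'_{|x_j,y_j|}\bigr),
\]
so $\mathbf{M}$ is rigid if and only if $\mathrm{Ext}^1(\mathbf{T}'_{|x_i,y_i|},\mathbf{T}'_{|x_j,y_j|})=0$ for every ordered pair $(i,j)$. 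Because the criterion must hold for both orders $(i,j)$ and $(j,i)$, the final condition will be symmetric in the two intervals, matching the symmetric list (1)--(4).

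The heart of the argument is the computation of $\mathrm{Ext}^1(\mathbf{T}'_I,\mathbf{T}'_J)$ for two intervals $I$ and $J$. I would do this directly: after choosing a pointwise vector-space splitting, any extension $0\to\mathbf{T}'_J\to\mathbf{E}\to\mathbf{T}'_I\to 0$ has $\mathbf{E}(x)=\mathbf{T}'_J(x)\oplus\mathbf{T}'_I(x)$ with upper-triangular transition maps
\[
\mathbf{E}(x,y)=\begin{pmatrix}\mathbf{T}'_J(x,y)&\eta(x,y)\\0&\mathbf{T}'_I(x,y)\end{pmatrix},\qquad \eta(x,y)\colon\mathbf{T}'_I(x)\to\mathbf{T}'_J(y),
\]
where $\eta(x,y)$ can be nonzero only for $x\in I$ and $y\in J$. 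Functoriality makes $\eta$ a cocycle and a change of splitting alters it by a coboundary, so $\mathrm{Ext}^1(\mathbf{T}'_I,\mathbf{T}'_J)$ is the cohomology of this small complex. Since all transition maps of $\mathbf{T}'_I$ and $\mathbf{T}'_J$ are identities on their supports, the cocycle is pinned down by a single scalar, and I expect to show that it is a coboundary---hence the extension splits---unless $I\cup J$ is again an interval, with $I$ the left-hand part and $J$ the right-hand part, and neither interval contained in the other. In that (and only that) case the unique nonsplit extension is the gluing sequence
\[
0\to\mathbf{T}'_J\to\mathbf{T}'_{I\cup J}\oplus\mathbf{T}'_{I\cap J}\to\mathbf{T}'_I\to 0
\]
(with the convention $\mathbf{T}'_\emptyset=0$), and $\mathrm{Ext}^1(\mathbf{T}'_I,\mathbf{T}'_J)$ is one-dimensional.

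It then remains to match ``neither ordered $\mathrm{Ext}^1$ is nonzero'' with the list (1)--(4). If $I\subseteq J$ or $J\subseteq I$, the union is the larger interval and the gluing sequence is split, so both Ext groups vanish, giving (1). If $y_i<x_j$ or $y_j<x_i$ there is a genuine gap, $I\cup J$ is not an interval, no gluing is possible, and every connecting map is forced through the zero space on the gap; this is (2). The only delicate case is when the intervals touch, say $y_i=x_j$: here $I\cup J$ is an interval precisely when the shared point $y_i=x_j$ lies in at least one of them, in which case $I$ crosses $J$ and $\mathrm{Ext}^1(\mathbf{T}'_I,\mathbf{T}'_J)\neq0$; the union fails to be an interval exactly when both intervals exclude the point, i.e.\ when $|x_i,y_i|=|x_i,y_i)$ and $|x_j,y_j|=(x_j,y_j|$, which is condition (3). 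Condition (4) is the mirror image with the roles of $I$ and $J$ exchanged. Every remaining configuration is a proper partial overlap in which $I\cup J$ is an interval, neither is nested, and exactly one of the two orders glues; these are precisely the incompatible pairs.

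The step I expect to be the main obstacle is the explicit $\mathrm{Ext}^1$ computation of the second paragraph. Unlike the discrete $A_n$ case, the interval representations of the continuous quiver $A_{\mathbb{R}}$ need not admit projective covers, so one cannot simply read the answer off a two-term projective resolution, and the cocycle--coboundary analysis must be carried out by hand. The genuinely fiddly part is the endpoint bookkeeping at a shared point: the four combinations of open and closed endpoints decide whether $I\cup J$ and $I\cap J$ are intervals, and hence whether the gluing sequence exists; it is exactly this bookkeeping that produces the open/closed decorations appearing in (3) and (4). Once the one-dimensionality of the crossing Ext is established with the correct conventions, the equivalence with (1)--(4) follows from the case analysis above.
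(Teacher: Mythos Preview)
Your argument follows the same reduction as the paper: both invoke Lemma~\ref{lemma-rigid} to pass from $\mathrm{Rep}'_k(\mathbf{Q})$ to $\mathrm{Rep}'_k(A_{\mathbb{R}})$, and then characterise rigidity there by the pairwise compatibility of the interval summands. The only difference is that the paper obtains the $A_{\mathbb{R}}$-criterion by citing Lemma~2.2 of \cite{liu2023maximalrigidrepresentationscontinuous}, whereas you reprove it via an explicit Yoneda cocycle/coboundary analysis of $\mathrm{Ext}^1(\mathbf{T}'_I,\mathbf{T}'_J)$ and the gluing sequence $0\to\mathbf{T}'_J\to\mathbf{T}'_{I\cup J}\oplus\mathbf{T}'_{I\cap J}\to\mathbf{T}'_I\to 0$. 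Your route is more self-contained and makes the open/closed endpoint bookkeeping behind (3)--(4) visible; the paper's is shorter but leans on the external reference.

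One small point to tighten: for an infinite index set $I$ the displayed identity $\mathrm{Ext}^1(\mathbf{M},\mathbf{M})=\bigoplus_{i,j}\mathrm{Ext}^1(\mathbf{T}'_{|x_i,y_i|},\mathbf{T}'_{|x_j,y_j|})$ is not literally a direct sum (the first variable turns sums into products). This does not affect the equivalence you need, since each $\mathbf{T}'_{|x_i,y_i|}$ is a \emph{split} summand of $\mathbf{M}$, so pushout and pullback along the inclusion and projection embed every pairwise $\mathrm{Ext}^1$ into $\mathrm{Ext}^1(\mathbf{M},\mathbf{M})$; but it would be cleaner to phrase the reduction as ``$\mathrm{Ext}^1(\mathbf{M},\mathbf{M})=0$ iff all pairwise $\mathrm{Ext}^1$ vanish'' rather than as a direct-sum decomposition.
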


\begin{proof}
By Lemma 2.2 in \cite{liu2023maximalrigidrepresentationscontinuous} and Lemma \ref{lemma-rigid}, we get this result.
\end{proof}

\subsection{Representations of type $\alpha$}

Let $n$ be an integer greater than $1$ and $\alpha=(a_{i})_{i\in\mathbb{Z}}$ be a sequence such that $a_{0}=0$, $a_{i}<a_{i+1}$ and $a_{i+n}=\sigma(a_{i})$ for any $i\in\mathbb{Z}$.

For an object $\mathbf{M}$ of category $\mathrm{Rep}'_{k}(\mathbf{Q})$ with decomposition $\mathbf{M}=\bigoplus_{i\in I}\mathbf{T}'_{|x_i,y_i|}$ in $\mathrm{Rep}'_{k}(A_\mathbb{R})$,
let $\hat{I}=\{ | x_i,y_i || i\in I\}$.
For any $c\in(a_{s},a_{s+1})$, let
\begin{equation*}
\begin{aligned}
    \phi^{r}_{1}(c,\mathbf{M})&=\{d\,|\,a_{s+1}\leq d, [c,d]\in \hat{I}\};\\
     \phi^{r}_{2}(c,\mathbf{M})&=\{d\,|\,a_{s+1}\leq d, [c,d)\in \hat{I}\};\\
    \phi^{r}_{3}(c,\mathbf{M})&=\{d\,|\,a_{s+1}\leq d, (c,d]\in \hat{I}\};\\
    \phi^{r}_{4}(c,\mathbf{M})&=\{d\,|\,a_{s+1}\leq d, (c,d)\in \hat{I}\};\\
    \phi^{l}_{1}(c,\mathbf{M})&=\{d\,|\,d \leq a_{s}, [d,c]\in \hat{I}\};\\
     \phi^{l}_{2}(c,\mathbf{M})&=\{d\,|\,d \leq a_{s}, [d,c)\in \hat{I}\};\\
    \phi^{l}_{3}(c,\mathbf{M})&=\{d\,|\,d \leq a_{s}, (d,c]\in \hat{I}\};\\
    \phi^{l}_{4}(c,\mathbf{M})&=\{d\,|\,d \leq a_{s}, (d,c)\in \hat{I}\}.\\
\end{aligned}
\end{equation*}

The representation $M$ is called  of type $\alpha$, if it satisfies the following conditions:
\begin{enumerate}
    \item for any $c\not\in \{a_{i}|i\in\mathbb{Z}\}$, we have
    \begin{equation*}
        \begin{aligned}       
            \phi^{r}_{1}(c,\mathbf{M})=\phi^{r}_{3}(c,\mathbf{M})
            \subset\{a_{i}|i\in\mathbb{Z}\},\\
            \phi^{r}_{2}(c,\mathbf{M})=\phi^{r}_{4}(c,\mathbf{M})
            \subset\{a_{i}|i\in\mathbb{Z}\},\\
            \phi^{l}_{1}(c,\mathbf{M})=\phi^{l}_{3}(c,\mathbf{M})
            \subset\{a_{i}|i\in\mathbb{Z}\},\\
            \phi^{l}_{2}(c,\mathbf{M})=\phi^{l}_{4}(c,\mathbf{M})
            \subset\{a_{i}|i\in\mathbb{Z}\};
        \end{aligned}
   \end{equation*}
   and
   \begin{equation*}
    |\phi^{r}_{1}(c,\mathbf{M})|+|\phi^{r}_{2}(c,\mathbf{M})|+|\phi^{l}_{1}(c,\mathbf{M})|+|\phi^{l}_{2}(c,\mathbf{M})|=1.
    \end{equation*}
    \item for any $s\in\mathbb{Z}$ and $c,f\in (a_{s},a_{s+1})$, we have
    \begin{equation*}
        \begin{aligned}
           \phi^{r}_{1}(c,\mathbf{M})=\phi^{r}_{1}(f,\mathbf{M}),\,\,\phi^{r}_{2}(c,\mathbf{M})=\phi^{r}_{2}(f,\mathbf{M}),\\ \phi^{r}_{3}(c,\mathbf{M})=\phi^{r}_{3}(f,\mathbf{M}),\,\,\phi^{r}_{4}(c,\mathbf{M})=\phi^{r}_{4}(f,\mathbf{M}),\\ \phi^{l}_{1}(c,\mathbf{M})=\phi^{l}_{1}(f,\mathbf{M}),\,\,\phi^{l}_{2}(c,\mathbf{M})=\phi^{l}_{2}(f,\mathbf{M}),\\
           \phi^{l}_{3}(c,\mathbf{M})=\phi^{l}_{3}(f,\mathbf{M}),\,\,\phi^{l}_{4}(c,\mathbf{M})=\phi^{l}_{4}(f,\mathbf{M}).
        \end{aligned}
    \end{equation*}
\end{enumerate}

%\begin{remark}
   % In this paper, the interval $(a_{s},a_{s+1})$ is fixed for any $0\leq s\leq n-1$. In this way, we can establish a connection with the representation of type $\mathbf{Q}$. Its settings are similar to that in Section ”Other Dynkin types” of Appendix $B$ in \cite{2019The}.
%\end{remark}

\begin{example}
\label{111}
     Let  $\alpha=(a_{i})_{i\in\mathbb{Z}}$ be a sequence such that $a_{0}=0$, $a_{1}=1$, $a_{i}<a_{i+1}$ and $a_{i+1}=\sigma(a_{i})$ for any $i\in\mathbb{Z}$. Consider the following representations
     \begin{equation*}
         \begin{aligned}
             \textbf{M}_{1}&=\textbf{T}_{(0,1)}\oplus \textbf{T}_{(-\infty,1)}\oplus \bigoplus_{x\in (0,1)} (\textbf{T}_{[x,1)}\oplus \textbf{T}_{(x,1)}),\\
             \textbf{M}_{2}&=\textbf{T}_{(0,1)}\oplus \textbf{T}_{(-\infty,1)}\oplus \bigoplus_{x\in (0,1)} (\textbf{T}_{(0,x)}\oplus \textbf{T}_{(0,x]}),\\
             \textbf{M}_{3}&=\textbf{T}_{[0,0]}\oplus \textbf{T}_{(-\infty,0]}\oplus \bigoplus_{x\in (0,1)} (\textbf{T}_{(-\infty,x)}\oplus \textbf{T}_{(-\infty,x]}),\\
            \textbf{M}_{4}&=\textbf{T}_{[0,0]}\oplus \textbf{T}_{(-\infty,0]}\oplus \bigoplus_{x\in (0,1)} (\textbf{T}_{[x,1]}\oplus \textbf{T}_{(x,1]}),\\
            \textbf{M}_{5}&=\textbf{T}_{(-\infty,0]}\oplus \textbf{T}_{(-\infty,1)}\oplus \bigoplus_{x\in (0,1)} (\textbf{T}_{(-\infty,x)}\oplus \textbf{T}_{(-\infty,x]}),\\
            \textbf{M}_{6}&=\textbf{T}_{(-\infty,0]}\oplus \textbf{T}_{(-\infty,1)}\oplus \bigoplus_{x\in (0,1)} (\textbf{T}_{[x,1)}\oplus \textbf{T}_{(x,1)}).\\
            \textbf{M}_{7}&=\textbf{T}_{(0,1)}\oplus \textbf{T}_{(0,+\infty)}\oplus \bigoplus_{x\in (0,1)} (\textbf{T}_{[x,1)}\oplus \textbf{T}_{(x,1)}),\\
             \textbf{M}_{8}&=\textbf{T}_{(0,1)}\oplus \textbf{T}_{(0,+\infty)}\oplus \bigoplus_{x\in (0,1)} (\textbf{T}_{(0,x)}\oplus \textbf{T}_{(0,x]}),\\
             \textbf{M}_{9}&=\textbf{T}_{[0,0]}\oplus \textbf{T}_{[0,+\infty)}\oplus \bigoplus_{x\in (0,1)} (\textbf{T}_{[0,x)}\oplus \textbf{T}_{[0,x]}),\\
            \textbf{M}_{10}&=\textbf{T}_{[0,0]}\oplus \textbf{T}_{[0,+\infty)}\oplus \bigoplus_{x\in (0,1)} (\textbf{T}_{[x,+\infty)}\oplus \textbf{T}_{(x,+\infty)}),\\
            \textbf{M}_{11}&=\textbf{T}_{[0,+\infty)}\oplus \textbf{T}_{(0,+\infty)}\oplus \bigoplus_{x\in (0,1)} (\textbf{T}_{(0,x)}\oplus \textbf{T}_{(0,x]}),\\
            \textbf{M}_{12}&=\textbf{T}_{[0,+\infty)}\oplus \textbf{T}_{(0,+\infty)}\oplus \bigoplus_{x\in (0,1)} (\textbf{T}_{[x,+\infty)}\oplus \textbf{T}_{(x,+\infty)}).\\
         \end{aligned}
     \end{equation*}
     The representations $\textbf{M}_{i}$ are of type $\alpha$ for $i=1,2,\cdots,12$. And the set 
     \begin{equation*}
         \{[\textbf{M}_{i}]|i=1,2,\cdots,12\}
     \end{equation*}
      gives all the isomorphism classes of maximal rigid representations of type $\alpha$.
\end{example}

The main result in this paper is about the number of isomorphism classes of maximal rigid representation of $\mathbf{Q}$ of type $\alpha$.

\begin{theorem}
\label{main result}
Let $\mathbf{Q}$ be the continuous quiver of type $A$ with automorphism and $\alpha=(a_{i})_{i\in\mathbb{Z}}$ be a sequence such that $a_{0}=0$, $a_{i}<a_{i+1}$ and $a_{i+n}=\sigma(a_{i})$ for any $i\in\mathbb{Z}$. Let 
\begin{equation*}
     \mathbf{A}=\{[\mathbf{M}]|\mathbf{M} \textrm{ is a maximal  rigid representation of $\mathbf{Q}$ of type $\alpha$}\}.
\end{equation*}
Then
\begin{equation*}
        |\mathbf{A}|=2^{n+1}\left(
        \begin{aligned}
            4n-1\\
            2n-1
        \end{aligned}
        \right).
    \end{equation*}
                                                   
\end{theorem}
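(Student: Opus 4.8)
The plan is to reduce the count to a finite combinatorial problem on a cyclic quiver and then invoke the Buan--Krause formula. First, by Lemma \ref{lemma-rigid} it suffices to work inside $\mathrm{Rep}'_{k}(A_{\mathbb{R}})$, where rigidity is governed by the interval compatibility conditions of the preceding proposition. Thus a maximal rigid representation of $\mathbf{Q}$ of type $\alpha$ is the same as a $\sigma$-invariant family $\hat{I}$ of pairwise compatible intervals that is maximal among such families and satisfies the type $\alpha$ conditions. My first step is to show that the type $\alpha$ conditions collapse the continuous data to finite data: condition (2) forces the $\phi^{r}_{j}(c,\mathbf{M})$ and $\phi^{l}_{j}(c,\mathbf{M})$ to be constant as $c$ ranges over a gap $(a_{s},a_{s+1})$, so the intervals meeting a gap depend only on the gap, not on the chosen point; condition (1), together with the identities $\phi^{r}_{1}=\phi^{r}_{3}$, $\phi^{r}_{2}=\phi^{r}_{4}$ and their left analogues, forces the intervals through a non-grid point to occur in matched closed/open pairs attached to grid endpoints, exactly as in the families $\bigoplus_{x\in(0,1)}(\mathbf{T}_{[x,1)}\oplus\mathbf{T}_{(x,1)})$ appearing in Example \ref{111}. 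Consequently every maximal rigid representation of type $\alpha$ is determined by finitely many intervals whose endpoints lie in $\{a_{i}\}\cup\{\pm\infty\}$, together with the record of which gaps are filled and with what open/closed boundary behaviour.

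Next I would pass to the quotient by $\sigma$. Since $a_{i+n}=\sigma(a_{i})$, one period of $\alpha$ contains the $n$ grid points $a_{0},\dots,a_{n-1}$ and the $n$ gaps between them, so the $\sigma$-orbit space is a cycle with $2n$ alternating sites (grid points and gaps). The compatibility conditions of the rigidity criterion (nesting, disjointness, and the two endpoint-matching cases) translate precisely into the compatibility of arcs on this cyclic configuration, and maximal compatible families of such arcs should be in bijection with the basic maximal rigid (tilting) representations of the cyclic quiver on $2n$ vertices. Applying the Buan--Krause classification and count from \cite{2004Tilting} to this cyclic quiver then yields $\binom{4n-1}{2n-1}$ such families.

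Finally I would account for the remaining binary data not seen by the cyclic arc structure. At each of the $n$ grid points in one period there is an independent choice of open/closed endpoint type for the intervals incident to it, contributing a factor $2^{n}$, and there is one further global choice corresponding to the two infinite directions $\pm\infty$ (in Example \ref{111} this is visible as the split between $\mathbf{M}_{1},\dots,\mathbf{M}_{6}$, which use a left-infinite interval, and $\mathbf{M}_{7},\dots,\mathbf{M}_{12}$, which use a right-infinite one), contributing a further factor $2$. Multiplying gives $|\mathbf{A}|=2^{n+1}\binom{4n-1}{2n-1}$, and as a sanity check the case $n=1$ gives $2^{2}\binom{3}{1}=12$, matching the twelve classes listed in Example \ref{111}.

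The main obstacle is the careful bookkeeping in the bijection of the previous two paragraphs: one must verify that distinct choices of open/closed boundary data and of the infinite direction really do yield non-isomorphic maximal rigid representations, with no overcounting coming from the $\sigma$-identifications, and, conversely, that every maximal compatible arc family on the $2n$-cycle lifts to a genuinely maximal --- not merely maximal-among-type-$\alpha$ --- rigid representation of $\mathbf{Q}$. Making precise the equivalence between $\sigma$-invariant compatible interval families and tilting modules over the $2n$-vertex cyclic quiver, so that the Buan--Krause count applies verbatim, is where the technical weight of the argument lies; once that equivalence is established the enumeration is immediate.
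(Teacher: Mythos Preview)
Your overall architecture matches the paper's: reduce to a discrete $2n$-site cyclic problem, invoke Buan--Krause for $\binom{4n-1}{2n-1}$, and pick up an extra $2^{n+1}$. The paper implements this via an explicit surjection $\tau:\mathbf{A}\to\hat{A}$ onto the maximal rigid classes of a $2n$-vertex quiver $\hat{Q}$ with automorphism (vertices $a_i$ and $a_{i,i+1}$), where Corollary~\ref{co2} gives $|\hat{A}|=2\binom{4n-1}{2n-1}$ and Lemma~\ref{lemma2} shows every fibre has size $2^{n}$.

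However, your attribution of the $2^{n}$ factor is wrong, and this is exactly the bookkeeping you flag as the main obstacle. The open/closed choices at grid endpoints are \emph{already} encoded in the $2n$-vertex model: an interval $(a_i,a_j]$ becomes $[a_{i,i+1},a_j]$, $[a_i,a_j)$ becomes $[a_i,a_{j-1,j}]$, and so on, so the passage from $n$ to $2n$ vertices is precisely what absorbs open/closed data. There is no further $2^{n}$ to be extracted from it. The genuine $2^{n}$ comes instead from the intervals with a non-grid endpoint --- those annihilated by the paper's map $\tau_1$. For each gap $(a_s,a_{s+1})$, maximality forces the family $\{\mathbf{T}_x\}_{x\in(a_s,a_{s+1})}$ to be either entirely right-facing, of the form $\mathbf{T}_{[x,a_t|}\oplus\mathbf{T}_{(x,a_t|}$, or entirely left-facing, of the form $\mathbf{T}_{|a_t,x]}\oplus\mathbf{T}_{|a_t,x)}$, with the specific $t$ and the boundary type at $a_t$ determined by the rest of the configuration; only the left/right dichotomy is free. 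That is the per-gap binary choice, $n$ gaps per period, hence $2^{n}$. You can see this already in Example~\ref{111}: $\mathbf{M}_1$ and $\mathbf{M}_2$ share the same grid-endpoint intervals $\mathbf{T}_{(0,1)}\oplus\mathbf{T}_{(-\infty,1)}$ (same open/closed data), and differ only in whether the gap is filled by $\mathbf{T}_{[x,1)}\oplus\mathbf{T}_{(x,1)}$ or by $\mathbf{T}_{(0,x)}\oplus\mathbf{T}_{(0,x]}$. Your identification of the remaining factor $2$ with the $\pm\infty$ direction is correct and agrees with Lemma~\ref{lemma3.3}.
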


The proof of this theorem will be given in Section \ref{proof theorem}.

\section{Titling representations for cyclic quivers}

\subsection{Quivers and representations}

Let $Q=(Q_0,Q_1,h,t)$ be a quiver, where $Q_0$ is the set of vertices, $Q_1$ is the set of arrows, $h,t:Q_1\rightarrow Q_0$ are two maps such that $h(\alpha)$ is the head and $t(\alpha)$ is the tail of an arrow $\alpha$.

A representation of $Q$ over $k$ is given by $V=(V(a),V(\rho))$, where
${V}(a)$ is a $k$-vector space for any $a\in{Q}_0$ and ${V}(\rho):{V}(t(\rho))\rightarrow{V}(h(\rho))$ is a $k$-linear map for any $\rho\in{Q}_1$.
Let ${V}=({V}(a),{V}(\rho))$ and  ${W}=({W}(a),{W}(\rho))$ be two representations of ${Q}$. A family of $k$-linear maps $\varphi=(\varphi_a)$ is called a morphism from $V$ to $W$, if $\varphi_{h(\rho)}V(\rho)=W(\rho)\varphi_{t(\rho)}$ for any $\rho\in Q_1$.
Denote by $\mathrm{Rep}_{k}({Q})$ the category of representations of ${Q}$.

A representation $M$ of ${Q}$ over $k$ is called basic if the direct summands of $M$ are not
isomorphic to each other, and is called rigid if $\mathrm{Ext}^{1}(M,M)=0$.
A rigid representation $M$ is called tilting if the projective dimension of $M$ is at most $1$, and there exists a short exact sequence $0\rightarrow P\rightarrow M_0\rightarrow M_1\rightarrow 0$ with each $M_i$ in $\mathrm{add}M$ for any indecomposable projective representation $P$ of ${Q}$.

\subsection{Representations for cyclic quivers}

Let ${\Delta_n}=({Q}_0,{Q}_1,h,t)$ be a cyclic quiver, where ${Q}_0=(0,1,\ldots,n-1)$, ${Q}_1=(\alpha_{1},\alpha_{2},\cdots,\alpha_{n})$, $h(\alpha_{i})=i-1$ for any $i=0,1,\ldots,n-1$, $t(\alpha_{i})=i$ for any $i=0,1,\ldots,n-2$ and $t(\alpha_{n})=0$.

\begin{proposition}[\cite{2004Tilting}]\label{number-tilting}
\label{proposotion1}
    Let $$\tilde{A}=\{[M]|M\textrm{ is a basic and tilting representation of }\Delta_n\},$$ then
    \begin{equation*}
        |\tilde{A}|=\left(
        \begin{aligned}
            2n-1\\
            n-1
        \end{aligned}
        \right).
    \end{equation*}
\end{proposition}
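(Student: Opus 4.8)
Since the statement is quoted from \cite{2004Tilting}, it may be cited directly; what follows is the route I would take to prove it independently.

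The plan is to reduce the enumeration of basic tilting representations of $\Delta_n$ to a combinatorial count, using the classification of indecomposables together with the Ext-vanishing condition, and then to evaluate that count. First I would recall that the indecomposable finite-dimensional (nilpotent) representations of $\Delta_n$ are the uniserial modules $M_{i,\ell}$, determined by a top vertex $i\in\mathbb{Z}/n\mathbb{Z}$ and a length $\ell\geq 1$. In the finite-dimensional setting in which the tilting condition of the excerpt makes sense (so that the required indecomposable projectives $P$ exist), a basic tilting representation has exactly $n$ pairwise non-isomorphic indecomposable summands, which fixes the size of the configurations to be counted. I would then compute $\mathrm{Ext}^{1}(M_{i,\ell},M_{j,m})$ directly from the structure of the hereditary category, extracting an explicit criterion for when two indecomposables can occur simultaneously as summands of a rigid module.

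Second, I would encode each indecomposable as an arc between marked points, most conveniently by passing to the universal cover of $\Delta_n$, which is the doubly infinite linear quiver $A_{\infty}^{\infty}$ with deck group generated by the shift $i\mapsto i+n$; a representation of $\Delta_n$ then lifts to a shift-periodic family of arcs on the line. Under this dictionary the Ext-criterion from the first step becomes a non-crossing (compatibility) condition on arcs, and a basic tilting representation corresponds to a maximal shift-periodic collection of pairwise compatible arcs, the exact-sequence condition in the definition of tilting being exactly what upgrades a partial such collection to a \emph{triangulation} of the periodic strip. I would check that maximality fixes the number of arcs per fundamental domain and that distinct periodic configurations give non-isomorphic tilting representations.

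Finally I would carry out the count in a single fundamental domain. The key contrast with the linear case is the following: for the linear quiver $A_n$ the non-crossing condition imposes a Dyck-path constraint and produces the Catalan number $C_n$, whereas the cyclic identification $i\mapsto i+n$ lets arcs wrap around the cycle and removes that constraint, so that every monotone lattice configuration in a period is realized exactly once; counting these unconstrained configurations yields $\binom{2n-1}{n-1}$, the number of monotone lattice paths of length $2n-1$. I expect the main obstacle to be precisely this last step: making the periodic arc model and its maximality condition rigorous enough to see that the Catalan constraint disappears and is replaced by the full binomial count, and in particular verifying that the periodicity neither over- nor under-counts configurations. The Ext-computation of the first step and the wrap-around bookkeeping of the last step are where the genuine work lies; the intermediate geometric encoding is routine once the compatibility criterion is established.
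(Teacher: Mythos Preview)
The paper does not give its own proof of this proposition; it is quoted directly from \cite{2004Tilting} and used as a black box, exactly as you anticipated in your first sentence. So there is nothing in the present paper to compare your sketch against.

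As an independent route, your outline is reasonable and in fact close in spirit to what Buan and Krause do: they too parametrize the indecomposable (nilpotent) $\Delta_n$-modules by pairs (vertex, length), translate the rigidity condition into an explicit combinatorial compatibility, and then set up a bijection with a family of objects counted by $\binom{2n-1}{n-1}$. The soft spot in your sketch is the final paragraph. The assertion that passing from the line to the cylinder simply ``removes the Dyck constraint'' so that one is left with \emph{all} monotone lattice paths of length $2n-1$ is exactly the content of the theorem, not an explanation of it; you would still need to exhibit a concrete bijection between periodic maximal compatible arc systems and, say, weakly increasing sequences $0\le c_1\le\cdots\le c_{n-1}\le n$ (which are what $\binom{2n-1}{n-1}$ actually counts via stars and bars), and to verify that the wrap-around neither identifies distinct tilting modules nor misses any. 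Buan and Krause do this by an explicit recursive construction rather than a one-line geometric observation, and that is where the genuine work sits. Your identification of this step as ``the main obstacle'' is accurate; the rest of the plan is sound.
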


\subsection{Representations of quiver of type $A_{\infty}$ with automorphism}
Let $Q=(Q_0,Q_1,h,t)$ be a quiver, where $Q_0=\mathbb{Z}$, $Q_1=\{\alpha_{i}|i\in\mathbb{Z}\}$, $t(\alpha_{i})=i-1$ and 
    $h(\alpha_{i})=i$.
The quiver $Q$ is of type $A_{\infty}$ and with linear orientation.

Consider two maps $\sigma_n:Q_0\rightarrow Q_0$ sending $i$ to $i+n$
and $\sigma_n:Q_1\rightarrow Q_1$ sending $\alpha_{i}$ to $\alpha_{i+n}$.
A representation of $(Q,\sigma_n)$ over $k$ is a representaiton $V=(V(i),V(\rho))$ of $Q$
such that ${V}(i)={V}(\sigma_n(i))$ and ${V}(\sigma_n(\rho))={V}(\rho)$ for any $i\in Q_0$ and $\rho\in Q_1$.

%Let $V=(V(i),V(\rho))$ and  $W=(W(i),W(\rho))$ be two representations of $Q$. A family of $k$-linear maps $\varphi=(\varphi_{i})_{i\in \mathbb{R}}$ is called a morphism from $\mathbf{V}$ to $\mathbf{W}$, if $\varphi_{y}\mathbf{V}(x,y)$= $\mathbf{W}(x,y) \varphi_{x}$, for any $x<y\in \mathbb{R}$.
%Denoted by $\mathrm{Rep}_{k}(A_{\mathbb{R}})$ the category of representations of $A_{\mathbb{R}}$ and $\mathrm{Rep}_{k}^{pwf}(A_{\mathbb{R}})$ the subcategory of pointwise finite-dimensinal representations of $A_{\mathbb{R}}$.

For a  representation ${V}=({V}(i),{V}(\rho))$ of $Q$, let $\sigma_n^\ast(V)=(\sigma_n^\ast(V)(i),\sigma_n^\ast(V)(\rho))$, where $\sigma_n^\ast(V)(i)=V(\sigma_n(i))$ and $\sigma_n^\ast(V)(\rho)=V(\sigma_{n}(\rho))$. Note that $\bigoplus_{k\in\mathbb{Z}}(\sigma_n^{\ast})^kV$ is a representation of $(Q,\sigma_n)$.
Denoted by $\mathrm{Rep}_{k}(Q,\sigma_n)$ the category of representations of $(Q,\sigma_n)$. 
Note that there exits an equivalence $$\Phi:\mathrm{Rep}_{k}(\Delta_n)\rightarrow \mathrm{Rep}_{k}(Q,\sigma_n)$$ such that $\Phi(V)(i)=V(\hat{i})$ and $\Phi(V)(\alpha_i)=V(\alpha_{\hat{i}})$, where $\hat{i}$ is the remainder of $i$ divided by $n$. The inverse of $\Phi$ is denoted by $\Psi$.

For a  representation ${V}=({V}(i),{V}(\rho))$ of $Q$, let $V^\ast=(V^\ast(i),V^\ast(\rho))$, where $V^\ast(i)=V(-i)$ and $V^\ast(\alpha_i)=V(\alpha_{-i-1})^T$.

For $a,b\in Q_0$ such that $a\leq b$, denoted by $T'_{[a,b]}=(T'_{[a,b]}(i),T'_{[a,b]}(\rho))$ the following representation of $Q$, where
\begin{equation*}
  T'_{[a,b]}(i)=\left\{  
\begin{aligned}
k,  & \qquad  x\in Q_0 \textrm{ and } a\leq x\leq b;\\
0, & \qquad \textrm{otherwise}; \\
\end{aligned}
\right.
\end{equation*}
and
\begin{equation*}
   T'_{[a,b]}(\rho)=\left\{  
\begin{aligned}
1_k,  & \qquad  a\leq t_{\rho}<h_{\rho}\leq b; \\
0,& \qquad \textrm{otherwise}. \\
\end{aligned}
\right.
\end{equation*}
Let ${T}_{[a,b]}=\bigoplus_{k\in\mathbb{Z}}(\sigma_n^{\ast})^{k}(T'_{[a,b]})$, which is a representation of $(Q,\sigma_n)$.

%Similar to continuous quiver, there also exist an equivalence between the representation of $\tilde{Q}$ and $Q$. So, we have the following lemma, which is similar to \ref{representations equal}.

\begin{lemma}
\label{lemma3.3}
For any representation $M$ of $(Q,\sigma_n)$, $M$ is maximal rigid if and only if $\Psi(M)$ or $\Psi(M^\ast)$ is basic and tilting. 
\end{lemma}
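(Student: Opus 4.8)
The plan is to push the statement through the equivalence $\Phi,\Psi$ to the cyclic quiver $\Delta_n$, where tilting is understood, and then to use the duality $\ast$ to trade cotilting modules for tilting ones. Concretely, set $N=\Psi(M)\in\mathrm{Rep}_k(\Delta_n)$. Since $\Phi$ and $\Psi$ are mutually inverse equivalences of categories, they preserve direct sum decompositions, indecomposability and $\mathrm{Ext}^1$, hence rigidity, basic-ness and maximal rigidity; so $M$ is maximal rigid in $\mathrm{Rep}_k(Q,\sigma_n)$ if and only if $N$ is maximal rigid in $\mathrm{Rep}_k(\Delta_n)$. The lemma thus reduces to the purely cyclic-quiver statement that a basic module $N$ over $\Delta_n$ is maximal rigid if and only if $N$ is basic tilting or basic cotilting, together with the identification of the cotilting case via $\ast$.

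First I would analyze the duality $\ast$. Because $Q$ has linear orientation and is carried to its opposite by the reflection $i\mapsto -i$ of $Q_0=\mathbb{Z}$, the assignment $V\mapsto V^\ast$ is a contravariant self-equivalence: it satisfies $(V^\ast)^\ast\cong V$, interchanges the indecomposable projectives and injectives, and identifies $\mathrm{Ext}^1(V,W)$ with $\mathrm{Ext}^1(W^\ast,V^\ast)$, so it preserves rigidity and exchanges the conditions $\mathrm{pd}\le 1$ and $\mathrm{id}\le 1$. One checks that $\ast$ sends $\sigma_n$-invariant representations to $\sigma_n$-invariant representations, so it restricts to a duality on $\mathrm{Rep}_k(Q,\sigma_n)$ and, transported by $\Phi,\Psi$, to the standard self-duality $N\mapsto N^\vee$ on $\mathrm{Rep}_k(\Delta_n)$ with $\Psi(M^\ast)\cong N^\vee$. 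A self-duality interchanging projectives and injectives carries tilting modules to cotilting modules and preserves basic-ness, so ``$\Psi(M^\ast)$ is basic and tilting'' is equivalent to ``$N$ is basic and cotilting''. Combined with the first reduction, the lemma will follow once the core dichotomy over $\Delta_n$ is established.

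It then remains to prove that a basic module $N$ over $\Delta_n$ is maximal rigid if and only if it is tilting or cotilting. The ``if'' direction is routine: a basic tilting (resp.\ cotilting) module is rigid by definition and is maximal among rigid modules by the classical characterization of (co)tilting modules, so no indecomposable can be adjoined while preserving rigidity. The ``only if'' direction is where the real work lies and I expect it to be the main obstacle. Using the classification of the indecomposables $T_{[a,b]}$ of $\Delta_n$ and a computation of their homological dimensions, one shows that every indecomposable has $\mathrm{pd}\le 1$ or $\mathrm{id}\le 1$ except for certain summands that ``wrap around'' the cycle, which have both $\mathrm{pd}>1$ and $\mathrm{id}>1$ and are obstructed by rigidity; maximality then forces $N$ to avoid the projective-dimension obstruction entirely (giving $\mathrm{pd}\,N\le 1$, whence tilting after verifying the exact-sequence condition) or the injective-dimension obstruction entirely (giving $\mathrm{id}\,N\le 1$, whence cotilting). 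Establishing this homological trichotomy and the forced dichotomy it produces—following the pattern of \cite{liu2023maximalrigidrepresentationscontinuous} and drawing where possible on the structural results of \cite{2004Tilting}—is the crux; once it is in place, chaining the three equivalences yields that $M$ is maximal rigid if and only if $\Psi(M)$ is basic tilting or $\Psi(M^\ast)$ is basic tilting.
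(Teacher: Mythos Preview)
Your approach differs from the paper's. The paper argues directly with the interval decomposition: writing
\[
M=\bigoplus_{i\in I_1}T_{[a_i,b_i]}\ \oplus\ \bigoplus_{i\in I_2}T_{(-\infty,d_i]}\ \oplus\ \bigoplus_{i\in I_3}T_{[c_i,+\infty)},
\]
one checks that a left-infinite summand and a right-infinite summand always have nontrivial $\mathrm{Ext}^1$ between them, so rigidity forces $I_2=\emptyset$ or $I_3=\emptyset$; in each case Lemma~3.2 of \cite{liu2023maximalrigidrepresentationscontinuous} (together with the fact that $\ast$ swaps the two infinite types and preserves maximal rigidity) gives that $\Psi(M)$, respectively $\Psi(M^\ast)$, is basic tilting. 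No homological dimension analysis is needed.

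Your proposed argument for the hard direction has a genuine gap. The path algebra $k\Delta_n$ carries no relations and is therefore hereditary, so \emph{every} module has $\mathrm{pd}\le 1$ and $\mathrm{id}\le 1$; there are no indecomposables with $\mathrm{pd}>1$ or $\mathrm{id}>1$, and the ``homological trichotomy'' you invoke simply does not exist. The split between the two cases in the lemma is not governed by projective or injective dimension but by the exact-sequence (generation/cogeneration) part of the tilting and cotilting conditions. Under $\Psi$ the summands $T_{(-\infty,d]}$ and $T_{[c,+\infty)}$ correspond, up to orientation conventions, to the indecomposable projectives and injectives of $\Delta_n$, and the reason a maximal rigid $M$ lands in one camp or the other is precisely the $\mathrm{Ext}^1$-incompatibility between these two interval types. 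Once the incorrect $\mathrm{pd}/\mathrm{id}$ step is removed, what remains to prove is exactly the paper's combinatorial observation, so your route does not actually bypass it.

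A smaller issue: the operation $\ast$ in the paper sets $V^\ast(i)=V(-i)$ without dualizing the vector spaces, so it is not the contravariant $k$-linear duality you describe; it is best read as the reflection on interval representations sending $T_{[a,b]}\mapsto T_{[-b,-a]}$. Your identification $\Psi(M^\ast)\cong N^\vee$ with the standard duality on $\Delta_n$ would need separate justification, whereas the paper only uses that $\ast$ interchanges the two infinite-interval families and preserves maximal rigidity.
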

\begin{proof}
Similarly to the proof of Lemma 3.2 in \cite{liu2023maximalrigidrepresentationscontinuous}, $\Psi(M)$ is basic and tilting implies that $M$ is maximal rigid. Since $M^\ast$ is maximal rigid if and only if $M$ is maximal rigid,  $\Psi(M^\ast)$ is basic and tilting implies that $M$ is maximal rigid, too.

Conversely, assume that $$M=\bigoplus_{i\in I_1}{T}_{[a_i,b_i]}\oplus\bigoplus_{i\in I_2}{T}_{(-\infty,d_i]}\oplus\bigoplus_{i\in I_3}{T}_{[c_i,+\infty)}.$$
Since $M$ is rigid, we have  $$M=\bigoplus_{i\in I_1}{T}_{[a_i,b_i]}\oplus\bigoplus_{i\in I_2}{T}_{(-\infty,d_i]}$$or$$M=\bigoplus_{i\in I_1}{T}_{[a_i,b_i]}\oplus\bigoplus_{i\in I_3}{T}_{[c_i,+\infty)}.$$
In the first case, the representation $M$ is maximal rigid implies that $\Psi(M)$ is basic and tilting. In the second case, the representation $M$ is maximal rigid implies that $\Psi(M^\ast)$ is basic and tilting.  
\end{proof}

\begin{corollary}
\label{co2}
Let $$A=\{[M]|M\textrm{ is a maximal rigid representation of }(Q,\sigma_n)\},$$ then
 \begin{equation*}
        |A|=2\left(
        \begin{aligned}
            2n-1\\
            n-1
        \end{aligned}
        \right).
    \end{equation*}
\end{corollary}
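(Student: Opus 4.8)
The plan is to partition the maximal rigid representations of $(Q,\sigma_n)$ according to the dichotomy in Lemma \ref{lemma3.3} and to count each part by transporting it to the basic tilting representations of $\Delta_n$, where Proposition \ref{number-tilting} applies. Concretely, I would set
\[
A_1=\{[M]\mid \Psi(M)\ \text{is basic and tilting}\},\qquad A_2=\{[M]\mid \Psi(M^\ast)\ \text{is basic and tilting}\}.
\]
By Lemma \ref{lemma3.3} a representation $M$ is maximal rigid if and only if it lies in $A_1\cup A_2$; hence $A=A_1\cup A_2$, and the task reduces to computing $|A_1|$, $|A_2|$, and the overlap $|A_1\cap A_2|$.

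First I would count $A_1$. Since $\Phi\colon\mathrm{Rep}_k(\Delta_n)\to\mathrm{Rep}_k(Q,\sigma_n)$ is an equivalence with inverse $\Psi$, it induces a bijection on isomorphism classes that preserves the properties \emph{basic} and \emph{tilting}. Thus $[M]\mapsto[\Psi(M)]$ is a bijection from $A_1$ onto the set $\tilde A$ of basic tilting representations of $\Delta_n$, with inverse induced by $\Phi$; by Proposition \ref{number-tilting} this gives $|A_1|=|\tilde A|=\binom{2n-1}{n-1}$. For $A_2$ I would exploit the duality $(-)^\ast$: it is an involution on isomorphism classes of $\mathrm{Rep}_k(Q,\sigma_n)$ (as $M^{\ast\ast}\cong M$), and by definition $[M]\in A_2$ exactly when $[M^\ast]\in A_1$. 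Hence $[M]\mapsto[M^\ast]$ restricts to a bijection $A_2\xrightarrow{\sim}A_1$, so $|A_2|=\binom{2n-1}{n-1}$ as well.

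It then remains to prove $A_1\cap A_2=\varnothing$, and this is the step I expect to be the main obstacle. Reading off the proof of Lemma \ref{lemma3.3}, a representation in $A_1$ is (up to the $\sigma_n$-action) a direct sum of finite interval modules $T_{[a_i,b_i]}$ together with left rays $T_{(-\infty,d_i]}$, while one in $A_2$ is a sum of finite intervals together with right rays $T_{[c_i,+\infty)}$; since a rigid module cannot contain both a left ray and a right ray, an element of $A_1\cap A_2$ would have to be a maximal rigid representation built from finite interval modules alone. The crux is therefore to rule out such a \emph{ray-free} maximal rigid representation --- equivalently, to show that every basic tilting representation of $\Delta_n$ contains a ray summand (an indecomposable projective, which $\Phi$ carries to a ray of $(Q,\sigma_n)$). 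I would establish this either directly, by showing that any rigid representation consisting solely of finite intervals admits a compatible ray extension and hence is not maximal, or homologically, by noting that the tilting condition (projective dimension at most $1$ together with the resolution of the indecomposable projectives) cannot be met by finite interval modules alone and so forces at least one ray to occur. Granting $A_1\cap A_2=\varnothing$, inclusion--exclusion yields
\[
|A|=|A_1|+|A_2|-|A_1\cap A_2|=2\binom{2n-1}{n-1},
\]
which is the desired formula.
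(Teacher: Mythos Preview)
Your approach is correct and is essentially the paper's argument: the paper packages the same dichotomy as the one-line assertion ``a representation $N$ is tilting is equivalent to that $N^\ast$ is not,'' which is exactly your disjointness claim $A_1\cap A_2=\varnothing$, and then concludes $|A|=2|\tilde A|$. Your homological justification of that crux---a tilting module must coresolve the infinite-dimensional indecomposable projectives of $\Delta_n$, so it cannot be built from finite intervals alone and hence carries at least one ray summand---is precisely what is needed to substantiate the paper's unproved assertion.
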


\begin{proof}
By Lemma \ref{lemma3.3}, there is a map $\Phi:\tilde{A}\rightarrow A$.
Note that a representation $N$ is tilting is quivalent to that $N^\ast$ is not. Hence, $|\Phi^{-1}([M])|=2$ for any maximal rigid representation $M$ of $(Q,\sigma_n)$. By Proposition \ref{number-tilting}, we have
\begin{equation*}
        |A|=2|\tilde{A}|=2\left(
        \begin{aligned}
            2n-1\\
            n-1
        \end{aligned}
        \right).
    \end{equation*}
\end{proof}

\section{Proof of Theorem \ref{main result}}
\label{proof theorem}
In this part, we will give a proof of Theorem \ref{main result}. Let $\mathbf{Q}$ be the continuous quiver of type $A$ with automorphism and $\alpha=(a_{i})_{i\in\mathbb{Z}}$ be a sequence such that $a_{0}=0$, $a_{i}<a_{i+1}$ and $a_{i+n}=\sigma(a_{i})$ for any $i\in\mathbb{Z}$. 
Denote
\begin{equation*}
      \mathbf{A}=\{[\mathbf{M}]|\textrm{$\mathbf{M}$ is a maximal rigid representation  of $\mathbf{Q}$ of type $\alpha$}\}
\end{equation*}
and 
\begin{equation*}
      \mathbf{B}=\{[\mathbf{M}]|\textrm{$\mathbf{M}$ is a rigid representation  of $\mathbf{Q}$ of type $\alpha$}\}.
\end{equation*}

Let $\bar{Q}=(\bar{Q}_0,\bar{Q}_1,\bar{\sigma})$ be the following quiver with automorphism $\bar{\sigma}$,
\\
\begin{center}
    \begin{tikzpicture}
        \filldraw[black] (-1.3,0) circle (1pt);
        \filldraw[black] (-1.5,0) circle (1pt);
        \filldraw[black] (-1.7,0) circle (1pt);
        \filldraw[black] (-1,0) circle (2pt);
        \node(a) at (-1,-0.75) {$a^-_0$};
        \draw[thick, ->] (-0.8,0)--(-0.2,0);
        \filldraw[black] (0,0) circle (2pt);
        \draw[thick, ->] (0.2,0)--(0.8,0);
        \node(a) at (0,-0.8) {$a_0$};
        \filldraw[black] (1,0) circle (2pt);
        \draw[thick, ->] (1.2,0)--(1.8,0);
        \node(a) at (1,-0.75) {$a_{0}^{+}$};
        \filldraw[black] (2,0) circle (2pt);
        \draw[thick, ->] (2.2,0)--(2.8,0);
        \node(a) at (2,-0.75) {$a_{1}^{-}$};
        \filldraw[black] (3,0) circle (2pt);
        \draw[thick, ->] (3.2,0)--(3.8,0);
        \node(a) at (3,-0.8) {$a_1$};
        \filldraw[black] (4,0) circle (2pt);
        \draw[thick, ->] (4.2,0)--(4.8,0);
        \node(a) at (4,-0.75) {$a_{1}^{+}$};
        \filldraw[black] (5.3,0) circle (1pt);
        \filldraw[black] (5.5,0) circle (1pt);
        \filldraw[black] (5.7,0) circle (1pt);
        \filldraw[black] (5.3,-0.75) circle (1pt);
        \filldraw[black] (5.5,-0.75) circle (1pt);
        \filldraw[black] (5.7,-0.75) circle (1pt);
        \draw[thick, ->] (6.2,0)--(6.8,0);
        \filldraw[black] (7,0) circle (2pt);
        \draw[thick, ->] (7.2,0)--(7.8,0);
        \node(a) at (7,-0.75) {$a_{n}^{-}$};
        \filldraw[black] (8,0) circle (2pt);
        \draw[thick, ->] (8.2,0)--(8.8,0);
        \node(a) at (8,-0.8) {$a_n$};
        \filldraw[black] (9,0) circle (2pt);
        %\draw[thick, ->] (9.2,0)--(9.8,0);
        \node(a) at (9,-0.75) {$a_{n}^{+}$};
        %\filldraw[black] (10,0) circle (2pt);
        %\draw[thick, ->] (10.2,0)--(10.8,0);
        %\node(a) at (10,-0.75) {$a^-_{{n+1}}$};
        %\filldraw[black] (11,0) circle (2pt);
        %\node(a) at (11,-0.8) {$a_{{n+1}}$};
        %\draw[thick, ->] (11.2,0)--(11.8,0);
        %\filldraw[black] (12,0) circle (2pt);
        %\node(a) at (12,-0.75) {$a^+_{{n+1}}$};
        \filldraw[black] (9.3,0) circle (1pt);
        \filldraw[black] (9.5,0) circle (1pt);
        \filldraw[black] (9.7,0) circle (1pt);
    \end{tikzpicture}
\end{center}
where $\bar{\sigma}:\bar{Q}_0\rightarrow\bar{Q}_0$ is a map sending $a_i$, $a^{+}_i$ and $a^{-}_i$ to $a_{i+n}$, $a^{+}_{i+n}$ and $a^{-}_{i+n}$, respectively.
Denote
\begin{equation*}
     \begin{aligned}
        \bar{B}=\{[M]|M=\bigoplus_{j\in I}T_{[x_j,y_j]}\textrm{ is a rigid  representation of $\bar{Q}$}\\
        \textrm{such than $x_j\neq a^{-}_{i}$ and $y_j\neq a^{+}_{i}$ for any  $i\in\mathbb{Z}$ and $j\in I$}\}.
      \end{aligned}
\end{equation*}

Let $\hat{Q}=(\hat{Q}_0,\hat{Q}_1,\hat{\sigma})$ be the following quiver with automorphism $\hat{\sigma}$,
\\
\begin{center}
    \begin{tikzpicture}
        \filldraw[black] (-1.3,0) circle (1pt);
        \filldraw[black] (-1.5,0) circle (1pt);
        \filldraw[black] (-1.7,0) circle (1pt);
        \filldraw[black] (-1,0) circle (2pt);
        \node(a) at (-1,-0.8) {$a_{-1,0}$};
        \draw[thick, ->] (-0.8,0)--(-0.2,0);
        \filldraw[black] (0,0) circle (2pt);
        \draw[thick, ->] (0.2,0)--(0.8,0);
        \node(a) at (0,-0.8) {$a_0$};
        \filldraw[black] (1,0) circle (2pt);
        \draw[thick, ->] (1.2,0)--(1.8,0);
        \node(a) at (1,-0.8) {$a_{0,1}$};
        \filldraw[black] (2,0) circle (2pt);
        \draw[thick, ->] (2.2,0)--(2.8,0);
        \node(a) at (2,-0.8) {$a_{1}$};
        \filldraw[black] (3,0) circle (2pt);
        \draw[thick, ->] (3.2,0)--(3.8,0);
        \node(a) at (3,-0.8) {$a_{1,2}$};
        \filldraw[black] (4,0) circle (2pt);
        \draw[thick, ->] (4.2,0)--(4.8,0);
        \node(a) at (4,-0.8) {$a_{2}$};
        \filldraw[black] (5.3,0) circle (1pt);
        \filldraw[black] (5.5,0) circle (1pt);
        \filldraw[black] (5.7,0) circle (1pt);
        \filldraw[black] (5.3,-0.75) circle (1pt);
        \filldraw[black] (5.5,-0.75) circle (1pt);
        \filldraw[black] (5.7,-0.75) circle (1pt);
        \draw[thick, ->] (6.2,0)--(6.8,0);
        \filldraw[black] (7,0) circle (2pt);
        \draw[thick, ->] (7.2,0)--(7.8,0);
        \node(a) at (7,-0.75) {$a_{n-1,n}$};
        \filldraw[black] (8,0) circle (2pt);
        \draw[thick, ->] (8.2,0)--(8.8,0);
        \node(a) at (8,-0.8) {$a_n$};
        \filldraw[black] (9,0) circle (2pt);
        \node(a) at (9,-0.8) {$a_{n,n+1}$};
        \filldraw[black] (9.3,0) circle (1pt);
        \filldraw[black] (9.5,0) circle (1pt);
        \filldraw[black] (9.7,0) circle (1pt);
    \end{tikzpicture}
\end{center}
where $\hat{\sigma}:\hat{Q}_0\rightarrow\hat{Q}_0$ is a map sending $a_i$ and $a_{i,i+1}$ to $a_{i+n}$ and $a_{i+n,i+n+1}$, respectively.
Denote
\begin{equation*}
      \hat{A}=\{[M]|M\,\, is\,\, a\, \,maximal\,\, rigid\,\, representation\,\, of\,\, \hat{Q}\}
\end{equation*}
and 
\begin{equation*}
      \hat{B}=\{[M]|M\,\, is\,\, rigid\,\, representation\,\, of\,\, \hat{Q}\}.
\end{equation*}

%\xymatrix{
%\bullet \ar[r] & \bullet
%}

\begin{comment}
\begin{remark}
    Because $\alpha=(a_{0},a_{1},a_{2},\cdots,a_{n})$ with $a_{0}=0$ and satisfies $a_{i}<a_{i+1}\in [0,1)$ for any $0\leq i\leq n-1$ and $\mathbf{Q}$ is continuous quiver of type $A$ with automorphism, the information of point $a_{{0}^{'}}$ is equivalent to the information of point $a_{0}$.
\end{remark}    
\end{comment}

Consider a map $\tau_1:\mathbf{B}\rightarrow\bar{B}$ defined by 
\begin{enumerate}
    \item for any $x\not\in\{a_{i}|i\in\mathbb{Z}\}$,
    \begin{equation*}
        \tau_{1}([\mathbf{T}_{| a_{i},x|}])=0,\,\tau_{1}([\mathbf{T}_{| x,a_{i}|}])=0;
    \end{equation*}
    \item for any $i<j\in\mathbb{Z}$,
    \begin{equation*}
        \begin{aligned}
            \tau_{1}([\mathbf{T}_{[a_{i},a_{j}]}])&=[T_{[a_{i},a_{j}]}],\\\tau_{1}([\mathbf{T}_{(a_{i},a_{j}]}])&=[T_{[a^{+}_{i},a_{j}]}],\\\tau_{1}([\mathbf{T}_{[a_{i},a_{j})}])&=[T_{[a_{i},a^{-}_{j}]}],\\\tau_{1}([\mathbf{T}_{(a_{i},a_{j})}])&=[T_{[a^{+}_{i},a^{-}_{j}]}];
        \end{aligned}
    \end{equation*}
    %\item for any $0\leq i \leq n$,
    %\begin{equation*}
        %\begin{aligned}
            %\eta_{1}([\mathbf{T}_{[a_{i},a_{{0}^{'}}]}])&=[T_{[a_{i},a_{{0}^{'}}]}],\\ \eta_{1}([\mathbf{T}_{(a_{i},a_{{0}^{'}}]}])&=[T_{[a^{+}_{i},a_{{0}^{'}}]}],\\ \eta_{1}([\mathbf{T}_{[a_{i},a_{{0}^{'}})}])&=[T_{{[a_{i},a_{{0}^{'}}}^{-}]}],\\ \eta_{1}([\mathbf{T}_{(a_{i},a_{{0}^{'}})}])&=[T_{{[a^{+}_{i},a_{{0}^{'}}}^{-}]}];
        %\end{aligned}
    %\end{equation*}
    \item for any $i\in\mathbb{Z}$,
    \begin{equation*}
        \tau_{1}([\mathbf{T}_{[a_{i},a_{i}]}])=[T_{[a_{i},a_{i}]}];
    \end{equation*}
    \item for any $[\mathbf{M}],[\mathbf{N}]\in \mathbf{B}$,
    \begin{equation*}
        \tau_{1}([\mathbf{M}\oplus \mathbf{N}])=[M\oplus N],
    \end{equation*}
    where $[M]=\tau_{1}([\mathbf{M}])$ and $[N]=\tau_{1}([\mathbf{N}])$.
\end{enumerate}

Consider a map $\tau_{2}:\bar{B}\rightarrow \hat{B}$ defined by
\begin{enumerate}
    \item for any $i<j\in\mathbb{Z}$,
    \begin{equation*}
        \begin{aligned}
            \tau_{2}([T_{[a_{i},a_{j}]}])&=[T_{[a_{i},a_{j}]}],\\\tau_{2}([T_{[a^{+}_{i},a_{j}]}])&=[T_{[a_{i,i+1},a_{j}]}],\\\tau_{2}([T_{[a_{i},a^{-}_{j}]}])&=[T_{[a_{i},a_{j-1,j}]}],\\\tau_{2}([T_{[a^{+}_{i},a^{-}_{j}]}])&=[T_{[a_{i,i+1},a_{j-1,j}]}];
        \end{aligned}
    \end{equation*}
    \item for any $i\in\mathbb{Z}$,
    \begin{equation*}
        \tau_{2}([T_{[a_{i},a_{i}]}])=[T_{[a_{i},a_{i}]}];
    \end{equation*}
    \item for any $[M],[N]\in \bar{B}$,
    \begin{equation*}
        \tau_{2}([M\oplus N])=[\hat{M}\oplus \hat{N}],
    \end{equation*}
    where $[\hat{M}]=\tau_{2}([M])$ and $[\hat{N}]=\tau_{2}([N])$.
\end{enumerate}

Similarly to Lemma 4.1 in \cite{liu2023maximalrigidrepresentationscontinuous}, we have the following lemma.

\begin{lemma}\label{lemma2}
    With above notations, we have
    \begin{enumerate}
        \item the map $\tau=\tau_{2}\tau_{1}:\mathbf{B}\rightarrow\hat{B}$ is surjective and $\tau(\mathbf{A})=\hat{A}$;
        \item for any $\hat{M}\in \hat{A}$, we have $|\tau_{\mathbf{A}}^{-1}([\hat{M}])|=2^{n}$, where $\tau_{\mathbf{A}}$ is the restriction of $\tau$ on $\mathbf{A}$.
    \end{enumerate}
\end{lemma}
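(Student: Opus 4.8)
The overall strategy is to translate the compatibility criterion of the Proposition for interval representations of $\mathbf{Q}$ into the ordinary rigidity criterion (``nested, or separated by a gap'') for interval modules over the linearly oriented quivers $\bar{Q}$ and $\hat{Q}$. Under this dictionary, $\tau$ forgets every interval with a non-lattice endpoint and retains only the open/closed combinatorics of the lattice-endpoint intervals; the two parts of the lemma then become, respectively, a decoding statement together with a maximality correspondence, and a count of the ways of reinserting the forgotten continuous families.

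First I would verify that $\tau_1$ and $\tau_2$ are well defined, that is, that they send rigid representations to rigid representations. The map $\tau_2$ is essentially a relabelling: in every gap $(a_i,a_{i+1})$ it identifies the auxiliary vertex $a_i^+$ (which in $\bar{B}$ occurs only as a left endpoint) and the auxiliary vertex $a_{i+1}^-$ (which occurs only as a right endpoint) with the single vertex $a_{i,i+1}$ of $\hat{Q}$; this is an order-, nesting- and gap-preserving bijection on interval types, so $\tau_2$ is a bijection $\bar{B}\to\hat{B}$ of rigid representations. The content therefore sits in $\tau_1$, for which one checks case by case that shifting a half-open endpoint to the adjacent auxiliary vertex converts each compatibility case (1)--(4) of the Proposition into a nested or gapped pair, and each incompatible pair into a crossing or abutting pair. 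The decisive cases are (3) and (4): two intervals touching at a lattice point $a_m$ with both endpoints open map to intervals separated by exactly the vertex $a_m$, hence with vanishing $\mathrm{Ext}^1$, whereas a shared \emph{closed} endpoint maps to a crossing pair and a closed/half-open abutment maps to intervals on consecutive vertices, both with non-vanishing $\mathrm{Ext}^1$. This is precisely the dictionary that makes $\tau$ compatibility-preserving on lattice-endpoint intervals.

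For part (1), surjectivity of $\tau$ onto $\hat{B}$ follows by decoding: a rigid $\hat{M}=\bigoplus_{j}T_{[u_j,v_j]}$ of $\hat{Q}$ determines, according to whether each $u_j,v_j$ is a lattice or an auxiliary vertex, a pairwise compatible family of lattice-endpoint intervals of $\mathbf{Q}$, to which I adjoin one compatible continuous family in each gap so as to meet the type $\alpha$ condition; the resulting representation lies in $\mathbf{B}$ and maps to $\hat{M}$. The equality $\tau(\mathbf{A})=\hat{A}$ is the assertion that $\mathbf{M}$ is maximal rigid of type $\alpha$ if and only if $\tau(\mathbf{M})$ is maximal rigid in $\hat{Q}$: an interval enlarging $\tau(\mathbf{M})$ would decode to a lattice-endpoint interval enlarging the discrete part of $\mathbf{M}$, contradicting maximality, and conversely when $\tau(\mathbf{M})$ is maximal no further lattice-endpoint interval can be adjoined to $\mathbf{M}$, so that only the continuous families remain free.

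Finally, for part (2) I fix $\hat{M}\in\hat{A}$ and count the $\mathbf{M}\in\mathbf{A}$ with $\tau(\mathbf{M})=\hat{M}$. The discrete part of any such $\mathbf{M}$ is forced, being the decoding of $\hat{M}$, so two preimages differ only in their continuous families. In each of the $n$ gaps $(a_s,a_{s+1})$, $0\le s\le n-1$, of a fundamental domain, the type $\alpha$ condition together with maximality forces the family through that gap to be one of exactly two compatible configurations---a right-opening family of intervals starting at the moving point, or a left-opening family of intervals ending at it---whose far endpoint and its open/closed type are already determined by $\hat{M}$; this is exactly the dichotomy between $\mathbf{M}_1$ and $\mathbf{M}_2$, between $\mathbf{M}_3$ and $\mathbf{M}_4$, and so on, in Example~\ref{111}. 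By $\hat{\sigma}$-periodicity these $n$ binary choices determine $\mathbf{M}$. I expect the crux of the whole argument to be precisely here: one must verify that the choices in distinct gaps are genuinely independent and each admissible against the fixed discrete part, so that all $2^n$ combinations occur and yield pairwise non-isomorphic maximal rigid representations; this is where the proof must follow most closely the case analysis of Lemma 4.1 in \cite{liu2023maximalrigidrepresentationscontinuous}, giving $|\tau_{\mathbf{A}}^{-1}([\hat{M}])|=2^{n}$.
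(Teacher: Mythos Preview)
Your proposal is correct and follows essentially the same approach as the paper, which simply defers to Lemma~4.1 of \cite{liu2023maximalrigidrepresentationscontinuous}; you have in fact supplied more of the underlying argument than the paper itself does. The structure---$\tau_2$ a relabelling bijection, $\tau_1$ a compatibility-to-rigidity dictionary on lattice-endpoint intervals, and the $2^n$ coming from an independent binary choice of continuous family in each of the $n$ gaps of a fundamental domain---is exactly what the referenced proof (and the commented-out sketch in the source) does.
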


The following example is an illustration of Lemma \ref{lemma2}.
\begin{example}
    Let  $\alpha=(a_{i})_{i\in\mathbb{Z}}$ be a sequence such that $a_{0}=0$, $a_{1}=1$, $a_{i}<a_{i+1}$ and $a_{i+1}=\sigma(a_{i})$ for any $i\in\mathbb{Z}$. In Example \ref{111}, we have 
    \begin{equation*}
        \mathbf{A}=\{[\mathbf{M}_{i}]|i=1,2,\cdots,12\}.
    \end{equation*}
    Now, $\hat{Q}$ is the following quiver
    \begin{center}
    \begin{tikzpicture}
        \filldraw[black] (-1.3,0) circle (1pt);
        \filldraw[black] (-1.5,0) circle (1pt);
        \filldraw[black] (-1.7,0) circle (1pt);
        \filldraw[black] (-1,0) circle (2pt);
        \node(a) at (-1,-0.8) {$a_{-1,0}$};
        \draw[thick, ->] (-0.8,0)--(-0.2,0);
        \filldraw[black] (0,0) circle (2pt);
        \draw[thick, ->] (0.2,0)--(0.8,0);
        \node(a) at (0,-0.8) {$a_0$};
        \filldraw[black] (1,0) circle (2pt);
        \draw[thick, ->] (1.2,0)--(1.8,0);
        \node(a) at (1,-0.8) {$a_{0,1}$};
        \filldraw[black] (2,0) circle (2pt);
        \node(a) at (2,-0.8) {$a_{1}$};
        \filldraw[black] (3,0) circle (2pt);
        \node(a) at (3,-0.8) {$a_{1,2}$};
         \draw[thick, ->] (2.2,0)--(2.8,0);
        \filldraw[black] (3.3,0) circle (1pt);
        \filldraw[black] (3.5,0) circle (1pt);
        \filldraw[black] (3.7,0) circle (1pt);
        %\node(a) at (2,0.5) {$1$};
        %\node(a) at (0,0.5) {$0$};
    \end{tikzpicture}
    \end{center}
     and
    \begin{equation*}
        \hat{A}=\{[M_{i}]|i=1,2,\ldots,6\},
    \end{equation*}
    where
    \begin{equation*}
        \begin{aligned}
            M_{1}&=T_{[a_{0,1},a_{0,1}]}\oplus T_{(-\infty,a_{0,1}]},\\
            M_{2}&=T_{[a_{0},a_{0}]}\oplus T_{(-\infty,a_{0}]},\\
            M_{3}&=T_{(-\infty,a_{0,1}]}\oplus T_{(-\infty,a_{0}]},\\
            M_{4}&=T_{[a_{0,1},a_{0,1}]}\oplus T_{[a_{0,1},+\infty)},\\
            M_{5}&=T_{[a_{0},a_{0}]}\oplus T_{[a_{0},+\infty)},\\
            M_{6}&=T_{[a_{0,1},+\infty)}\oplus T_{[a_{0},+\infty)}.
        \end{aligned}
    \end{equation*}
Now, $\tau(\mathbf{M}_{2k})=\tau(\mathbf{M}_{2k-1})=M_{k}$ and $|\tau^{-1}_{\mathbf{A}}([M_{k}])|=2$ for $k=1,2,\ldots,6$.
\end{example}

Based on above discussion, we give a proof of Theorem \ref{main result}.

\begin{proof}
According to Corollary \ref{co2}, we have
    \begin{equation*}
        |\hat{A}|=2\left(
        \begin{aligned}
            4n-1\\
            2n-1
        \end{aligned}
        \right)
    \end{equation*}
    Then, we have
    \begin{equation*}
        |\mathbf{A}|=2^{n}|\hat{A}|=2^{n+1}\left(
        \begin{aligned}
            4n-1\\
            2n-1
        \end{aligned}
        \right)
    \end{equation*}
by Lemma \ref{lemma2}.
\end{proof}

\section{Continuous quivers of type $\tilde{A}$}

%In this section, we give an extension of the result with automorphism based on the relationship between continuous quiver of type $A$ with automorphism and continuous quiver of type $\tilde{A}$, namely the number of isomorphism classes of maximal rigid representations of continuous quiver of type $\tilde{A}$.

Consider an equivalence relationship "${\sim}$" on $\mathbb{R}$, where $x\sim y$ if and only if $y-x\in\mathbb{Z}$. Let $[x]$ be the equivalent class of $x$ for any $x\in\mathbb{R}$ and define $[x]<[y]$ if and only if $x<y$ and $|x-y|<\frac{1}{2}$. Let $\tilde{\mathbf{A}}_{\mathbb{R}}=(\mathbb{R}/{\sim},<)$, which is called a continuous quiver of type $\tilde{A}$. We use the following diagram to represent this quiver.

\begin{center}
\begin{tikzpicture}
  \node(c) at (-2,0) {$\tilde{\mathbf{A}}_{\mathbb{R}}$ :};
  \node(d) at (0,-1.5) {$0=1$};
  \draw (0,0) circle (1cm);
  \draw[->] (1,0)--(1,0);
  \filldraw[black] (0,-1) circle (1.5pt);
\end{tikzpicture}
\end{center}

A representation of $\tilde{\mathbf{A}}_{\mathbb{R}}$ over $k$ is given by $\tilde{\mathbf{V}}=(\tilde{\mathbf{V}}([x]),\tilde{\mathbf{V}}([x],[y]))$, where $\tilde{\mathbf{V}}([x])$ is a $k$-vector space, and $\tilde{\mathbf{V}}([x],[y]):\tilde{\mathbf{V}}([x])\rightarrow \tilde{\mathbf{V}}([y])$ is a $k$-linear map for any $[x]<[y]\in \mathbb{R}/{\sim}$ satisfying that $\mathbf{V}([x],[y])\circ\mathbf{V}([y],[z])=\mathbf{V}([x],[z])$ for any $[x]<[y]<[z]\in\mathbb{R}/{\sim}$.
Let $\tilde{\mathbf{V}}=(\tilde{\mathbf{V}}([x]),\tilde{\mathbf{V}}([x],[y]))$ and  $\tilde{\mathbf{W}}=(\tilde{\mathbf{W}}([x]),\tilde{\mathbf{W}}([x],[y]))$ be representations of $\tilde{\mathbf{A}}_{\mathbb{R}}$. 
A family of $k$-linear maps $\varphi_{[x]}:\tilde{\mathbf{V}}([x])\rightarrow \tilde{\mathbf{W}}([x])$ is called a morphism from $\tilde{\mathbf{V}}$ to $\tilde{\mathbf{W}}$, if it satisfies $\varphi_{[y]} \tilde{\mathbf{V}}([x],[y])$= $\tilde{\mathbf{W}}([x],[y]) \varphi_{[x]}$ for any $[x]<[y]$.

Denoted by $\mathrm{Rep}_{k}(\tilde{\mathbf{A}}_{\mathbb{R}})$ the category of representations of $\tilde{\mathbf{A}}_{\mathbb{R}}$.
Note that there exists an equivalence $$\Theta:\mathrm{Rep}_{k}(\tilde{\mathbf{A}}_{\mathbb{R}})\rightarrow \mathrm{Rep}_{k}(\mathbf{Q})$$
such that
  $\Theta(\tilde{\mathbf{V}})(x)=\tilde{\mathbf{V}}([x])$ for any $x\in\mathbb{R}$ and 
   $\Theta(\tilde{\mathbf{V}})(x,y)=\tilde{\mathbf{V}}([x],[y])$
   for any $x<y\in\mathbb{R}$ such that $|x-y|<\frac{1}{2}$.

For any $a\leq b\in\mathbb{R}$, consider the representation $\tilde{\mathbf{T}}_{|a,b|}$ of $\tilde{\mathbf{A}}_{\mathbb{R}}$,
where
\begin{equation*}
\tilde{\mathbf{T}}_{|a,b|}([x])=\bigoplus_{x\in[x]}\mathbf{T}'_{|a,b|}(x),
\end{equation*}
\begin{equation*}
\tilde{\mathbf{T}}_{|a,b|}([x],[y])=\bigoplus_{x\in[x],y\in[y],|x-y|<1}\mathbf{T}'_{|a,b|}(x,y).
\end{equation*}
Note that $\Theta(\tilde{\mathbf{T}}_{|a,b|})=\mathbf{T}_{|a,b|}$.

%\begin{theorem}[Hanson and Rock \cite{2020Decomposition}]
   % Let $\tilde{\mathbf{A}}_{\mathbb{R}}$ be a continuous quiver of type $\tilde{A}$. Every pointwise finite-dimensional representation $\mathbf{V}$ of $\tilde{\mathbf{A}}_{\mathbb{R}}$ is decomposed into the direct sum of modules $\tilde{\mathbf{T}}_{|a,b|}$.
%\end{theorem}   

Denoted by $\mathrm{Rep}'_{k}(\tilde{\mathbf{A}}_{\mathbb{R}})$ the category of representations of $\tilde{\mathbf{A}}_{\mathbb{R}}$ with following decomposition
\begin{equation*}
  \tilde{\mathbf{M}}=\bigoplus_{i\in I}\tilde{\mathbf{T}}_{|x_i,y_i|}.
\end{equation*}

Similarly to the  maximal rigid representations in $\mathrm{Rep}'_{k}(\mathbf{Q})$, we can define the concept of maximal rigid representations in $\mathrm{Rep}'_{k}(A_\mathbb{R})$.

Let $n$ be an integer greater than $1$ and $\alpha=(a_{i})_{i\in\mathbb{Z}}$ be a sequence such that $a_{0}=0$, $a_{i}<a_{i+1}$ and $a_{i+n}=\sigma(a_{i})$ for any $i\in\mathbb{Z}$.   
Let $\tilde{\mathbf{M}}$ be an object in $\mathrm{Rep}'_{k}(\tilde{\mathbf{A}}_{\mathbb{R}})$. If $\Theta(\tilde{M})$ is of type $\alpha$, the $\tilde{M}$ is called of type $(a_0,a_1,\ldots,a_{n-1})$.

\begin{comment}
         \begin{enumerate}
        \item[(a)] for any objects $\tilde{\mathbf{V}}\in Rep'_{k}(\tilde{\mathbf{A}}_{\mathbb{R}})$ and $\mathbf{V}\in Rep'_{k}(\mathbf{Q})$, so
        \begin{equation*}
        \begin{aligned}
            \tau^{+}(\tilde{\mathbf{V}}([x]))&=\mathbf{V}(x),\\
            \tau^{+}(\tilde{\mathbf{V}}([x],[y]))&=\mathbf{V}(x,y);
        \end{aligned}   
        \end{equation*}
        \item[(b)] for any objects $\mathbf{V}\in Rep'_{k}(\mathbf{Q})$ and $\tilde{\mathbf{V}}\in Rep'_{k}(\tilde{\mathbf{A}}_{\mathbb{R}})$, so
        \begin{equation*}
        \begin{aligned}
            \tau^{-}(\mathbf{V}(x))&=\tilde{\mathbf{V}}([x]),\\
            \tau^{-}(\mathbf{V}(x,y))&=\tilde{\mathbf{V}}([x],[y]);
        \end{aligned}   
        \end{equation*}
    \end{enumerate}

\end{comment}

%Create an equivalence between the representation categories $Rep'_{k}(\tilde{\mathbf{A}}_{\mathbb{R}})$ of $\tilde{\mathbf{A}}_{\mathbb{R}}$ and the representation categories $Rep'_{k}(\mathbf{Q})$ of $\mathbf{Q}$ 

%Then, we give the following Lemma by Lemma \ref{eq}.

%\begin{lemma}
%\label{representations equal}
  %  The number of isomorphism classes of maximal rigid representations of $\tilde{\mathbf{A}}_{\mathbb{R}}$ is equal to that of $\mathbf{Q}$.
%\end{lemma}

As a corollary of Theorem \ref{main result}, we have the following theorem.

\begin{theorem}
    Let $\tilde{\mathbf{A}}_{\mathbb{R}}$ be a continuous quiver of type $\tilde{A}$ and $\alpha=(a_{0},a_{1},a_{2},\cdots,a_{n-1})$ with $a_{0}=0$ and $a_{i}<a_{i+1}\in [0,1)$ for any $0\leq i\leq n-2$. Let 
\begin{equation*}
     \tilde{\mathbf{A}}=\{[\tilde{\mathbf{M}}]|\tilde{\mathbf{M}} \textrm{ is a maximal rigid representation of  $\tilde{A}_{\mathbb{R}}$ of type $\alpha$}\}
\end{equation*}
Then
\begin{equation*}
        |\tilde{A}|=2^{n+1}\left(
        \begin{aligned}
            4n-1\\
            2n-1
        \end{aligned}
        \right).
    \end{equation*}
\end{theorem}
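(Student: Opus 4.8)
The plan is to deduce this statement directly from Theorem \ref{main result} by transporting everything across the categorical equivalence $\Theta:\mathrm{Rep}_{k}(\tilde{\mathbf{A}}_{\mathbb{R}})\rightarrow \mathrm{Rep}_{k}(\mathbf{Q})$ constructed above. Since $\Theta(\tilde{\mathbf{T}}_{|a,b|})=\mathbf{T}_{|a,b|}$ and every object of $\mathrm{Rep}'_{k}(\tilde{\mathbf{A}}_{\mathbb{R}})$ is by definition a direct sum of the indecomposables $\tilde{\mathbf{T}}_{|x_i,y_i|}$, the functor $\Theta$ restricts to an equivalence $\mathrm{Rep}'_{k}(\tilde{\mathbf{A}}_{\mathbb{R}})\rightarrow\mathrm{Rep}'_{k}(\mathbf{Q})$. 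First I would record the formal consequences: an equivalence of categories preserves isomorphism classes, direct-sum decompositions, and indecomposability, so $\Theta$ carries basic representations to basic representations and induces a bijection on the isomorphism classes of objects of these two subcategories. Here the finite tuple $(a_0,\ldots,a_{n-1})$ is understood to determine the periodic sequence $\alpha=(a_i)_{i\in\mathbb{Z}}$ via $a_{i+n}=\sigma(a_i)$, matching the hypotheses of Theorem \ref{main result}.

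Next I would verify that $\Theta$ preserves rigidity and maximal rigidity. Because $\Theta$ is an exact equivalence, it induces isomorphisms $\mathrm{Ext}^{1}(\tilde{\mathbf{M}},\tilde{\mathbf{N}})\cong\mathrm{Ext}^{1}(\Theta(\tilde{\mathbf{M}}),\Theta(\tilde{\mathbf{N}}))$ for all objects, so $\tilde{\mathbf{M}}$ is rigid if and only if $\Theta(\tilde{\mathbf{M}})$ is rigid. The maximality condition transports in the same way: $\tilde{\mathbf{M}}\oplus\tilde{\mathbf{N}}$ is rigid precisely when $\Theta(\tilde{\mathbf{M}})\oplus\Theta(\tilde{\mathbf{N}})$ is, and $\tilde{\mathbf{N}}\in\mathrm{add}\,\tilde{\mathbf{M}}$ precisely when $\Theta(\tilde{\mathbf{N}})\in\mathrm{add}\,\Theta(\tilde{\mathbf{M}})$. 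Hence $\tilde{\mathbf{M}}$ is maximal rigid if and only if $\Theta(\tilde{\mathbf{M}})$ is.

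It then remains to match the type condition, which is immediate: by the definition given just before the statement, $\tilde{\mathbf{M}}$ is of type $(a_0,\ldots,a_{n-1})$ exactly when $\Theta(\tilde{\mathbf{M}})$ is of type $\alpha$. Therefore the bijection induced by $\Theta$ restricts to a bijection between $\tilde{\mathbf{A}}$ and the set $\mathbf{A}$ of Theorem \ref{main result}. Combining these observations gives $|\tilde{\mathbf{A}}|=|\mathbf{A}|$, and Theorem \ref{main result} yields the asserted value $2^{n+1}\binom{4n-1}{2n-1}$.

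The only genuinely delicate point is the preservation of $\mathrm{Ext}^{1}$ under $\Theta$: one must confirm that $\Theta$ is exact, equivalently that the non-split short exact sequences realizing self-extensions in $\mathrm{Rep}'_{k}(\tilde{\mathbf{A}}_{\mathbb{R}})$ correspond bijectively to those in $\mathrm{Rep}'_{k}(\mathbf{Q})$, so that rigidity—and hence maximal rigidity—transfers faithfully. Everything else is the formal bookkeeping of transporting additive and homological structure across an equivalence.
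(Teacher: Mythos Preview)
Your proposal is correct and follows essentially the same approach as the paper, which states the result as an immediate corollary of Theorem~\ref{main result} via the equivalence $\Theta:\mathrm{Rep}_{k}(\tilde{\mathbf{A}}_{\mathbb{R}})\rightarrow \mathrm{Rep}_{k}(\mathbf{Q})$ without giving further details. Your write-up simply makes explicit the bookkeeping (preservation of isomorphism classes, rigidity, maximality, and the type condition under $\Theta$) that the paper leaves implicit.
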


\bibliographystyle{abbrv}
\bibliography{ref2}

\end{document}